\providecommand{\abs}[1]{\left|#1\right|}
\providecommand{\norm}[1]{\left \| #1\right \|}
\theoremstyle{plain}
\newtheorem{theorem}{Theorem}[]
\theoremstyle{plain}
\newtheorem{lemma}{Lemma}[]
\theoremstyle{plain}
\newtheorem{corollary}{Corollary}[]
\theoremstyle{plain}
\theoremstyle{remark}
\newtheorem{remark}{Remark}[]
\theoremstyle{plain}
\newtheorem{definition}{Definition}[]
\title[Uniqueness results for higher order systems]{Uniqueness Results for Higher Order Elliptic Equations and Systems}
\author[D.~Cassani]{Daniele Cassani$^\text{1}$}
\author[D.~Schiera]{Delia Schiera$^\text{2}$}
\address[D. Cassani]{\newline\indent Dip. di Scienza e Alta Tecnologia
\newline\indent
Universit\`{a} degli Studi dell'Insubria
\newline\indent and
\newline\indent RISM--Riemann International School of Mathematics
\newline\indent Villa Toeplitz, Via G.B. Vico, 46 -- 21100 Varese}
\email{\href{mailto:Daniele.Cassani@uninsubria.it}{Daniele.Cassani@uninsubria.it}}
\address[D. Schiera]{\newline\indent Dip. di Scienza e Alta Tecnologia
\newline\indent
Universit\`{a} degli Studi dell'Insubria
\newline\indent via Valleggio 11 -- 22100 Como}
\email{\href{mailto:d.schiera@uninsubria.it}{d.schiera@uninsubria.it}}
\thanks{(1) Corresponding author: \texttt{daniele.cassani@uninsubria.it}}
\subjclass[2010]{35J48, 35A02, 35B06}
\keywords{Polyharmonic operators, Lane-Emden systems}
\date{\today}
\begin{document}

\begin{abstract} In this paper we develop a Gidas-Ni-Nirenberg technique for polyharmonic equations and systems of Lane-Emden type. As far as we are concerned with Dirichlet boundary conditions, we prove uniqueness of solutions up to eighth order equations, namely which involve the fourth iteration of the Laplace operator. Then, we can extend the result to arbitrary polyharmonic operators of any order, provided some natural boundary conditions are satisfied but not for Dirichlet's: the obstruction is apparently a new phenomenon and seems due to some loss of information though far from being clear. When the polyharmonic operator turns out to be a power of the Laplacian, and this is the case of Navier's boundary conditions, as byproduct uniqueness of solutions holds in a fairly general context. New existence results for systems are also established.
\end{abstract}
\maketitle

%--------------------------------------------------------------------------------------------------------------------------
\section{Introduction}%------------------------------------------------------------------------------------------
%--------------------------------------------------------------------------------------------------------------------------

\noindent From the seminal paper of Gidas-Ni-Nirenberg \cite{GidasNiNirenberg79}, it is well known that the Lane--Emden equation
\begin{equation}\label{GNN}\begin{cases}
-\Delta u = \abs{u}^p\,, \,  &\text{ in } B_1 \subset \mathbb{R}^N, N > 2,\\
u=0\,, \, &\text{ on } \partial B_1
\end{cases} \end{equation}
with $1 < p < \frac{N+2}{N-2}$ has at most one, actually exactly one nontrivial solution, which is positive, radially symmetric and strictly decreasing in the radial variable ($B_1$ denotes the unit ball centered at the origin of $\mathbb{R}^N$). This result has been extended in many different directions and in particular to the biharmonic operator subject to Dirichlet boundary conditions
\begin{equation}\label{biheq} \begin{cases}
\Delta^2 u = \abs{u}^{p}\,, \,  &\text{ in } B_1 \subset \mathbb{R}^N, N > 4\\
u=\frac{\partial u}{\partial \nu}=0\,, \, &\text{ on } \partial B_1
\end{cases} \end{equation}
in \cite{Dalmasso95,FerreroGazzolaWeth07}, see also \cite{Dalmasso99} for the sublinear case, namely when $p<1$.
\noindent Uniqueness results have been also proved for the Lane--Emden system
\begin{equation}\label{hsys} \begin{cases}
\begin{aligned}
-\Delta u &= \abs{v}^{q}\,, \\
-\Delta v &= \abs{u}^{p}\,,
\end{aligned} & \text{ in } B_1 \subset \mathbb{R}^N, N > 2\\
u=v=0\,, & \text{ on } \partial B_1
\end{cases} \end{equation}
with $p, q>1$, see \cite{Dalmasso04} and then extended in  \cite{CuiWangShi07} to systems with more than two equations.

\noindent More recently, the non-variational situation has been addressed in \cite{Schiera18}, where uniqueness of solutions is established for the following system
\begin{equation}\label{deliasys} \begin{cases}
\begin{aligned}
\Delta^2 u=\abs{v}^q\,, \\
-\Delta v= \abs{u}^p\,,
\end{aligned} & \text{ in } B_1\subset \mathbb{R}^N, N > 4 \\
u=\frac{\partial u}{\partial \nu}=v=0\,, & \text{ on } \partial B_1\,.
\end{cases}\,
\end{equation}

%--------------------------------------------------------------------------------------------------------------------------
%---------------------------------------------------------------------------------------------
%--------------------------------------------------------------------------------------------------------------------------
\noindent In what follows $\Delta^\alpha(\cdot):=\Delta(\Delta^{\alpha-1}(\cdot))$, $\alpha\in\mathbb{N}$, $\alpha\geq 1$, denotes the iterated Laplace operator, the so-called polyharmonic operator. Clearly, this definition does not take into account boundary conditions according to which the iterated operator can be a power or not of the Laplacian. Higher order problems are more sensitive to boundary conditions with respect to the second order case, and   the richness of plenty of physically significant boundary values is the challenge which prevents to using standard tools from elliptic theory for second order operators, such as the maximum principle which fails in general for domains which are not slight perturbations of the ball; sufficient conditions for the validity of a general maximum principle have been addressed in \cite{CASTA}.
\subsection*{Main results} Before stating our main results let us make precise the notion of solution which in this context is always assumed to be in the classical pointwise sense. Our main results are the following:
\begin{theorem}\label{teo:eqnDir}
There exists at most one nontrivial solution to
\begin{equation}\label{eqnDir}
\begin{cases}
(-\Delta)^{\alpha} u=\abs{u}^p\,, \, &  \text{ in } B_1 \subset \mathbb{R}^N, N > 2\alpha\\
\frac{\partial^k u}{\partial \nu^k}=0, \, &  \text{ on } \partial B_1\,, \, k \le \alpha-1
\end{cases}
\end{equation}
with $p > 1$ and $1 \le \alpha \le 4$.
\end{theorem}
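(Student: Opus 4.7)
The plan is to follow a two-step Gidas--Ni--Nirenberg approach: first show that every classical solution of \eqref{eqnDir} is positive, radially symmetric and strictly decreasing in $r=|x|$; then prove uniqueness within the class of radial solutions via a shooting/ODE argument tailored to the order $\alpha$.

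\textbf{Step 1: radial symmetry.} For $\alpha=1$ this is the original Gidas--Ni--Nirenberg theorem. For $2\le\alpha\le 4$, even though the maximum principle for $(-\Delta)^\alpha$ is not available on general domains, on the unit ball the polyharmonic Green function is positive (Boggio), so either a moving-plane type argument driven by the positivity of the Green kernel or the known symmetry results for semilinear polyharmonic Dirichlet problems in $B_1$ (in the spirit of Berchio--Gazzola--Weth and Troy) can be invoked. The outcome is $u(x)=u(r)$ with $u>0$ in $B_1$ and $u'(r)<0$ on $(0,1)$.

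\textbf{Step 2: ODE uniqueness.} In radial coordinates, \eqref{eqnDir} reduces to an ODE of order $2\alpha$ on $[0,1]$, supplemented by the $\alpha$ Dirichlet conditions $u^{(k)}(1)=0$ for $0\le k\le\alpha-1$ together with $\alpha$ regularity conditions at the origin. Smooth solutions at $r=0$ form an $\alpha$-parameter family, indexed by $u(0),\Delta u(0),\ldots,\Delta^{\alpha-1}u(0)$; after normalizing one parameter through the scaling $u\mapsto\lambda^{2\alpha/(p-1)}u(\lambda\,\cdot\,)$, a shooting argument reduces uniqueness to the statement that two radial solutions agreeing at the origin must coincide. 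Supposing $u_1,u_2$ are such solutions, I would test the difference of the equations against a carefully chosen multiplier (essentially a polynomial in $r$ designed so that, via successive integrations by parts, the boundary contributions at $r=1$ which are not directly controlled by the Dirichlet data cancel out) and combine it with the strict monotonicity $u_i'<0$ to obtain a weighted Pohozaev-type identity whose definite sign forces $u_1\equiv u_2$.

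\textbf{Main obstacle.} The delicate step is the explicit construction of this multiplier and the verification of its sign property, which has to be carried out case by case in $\alpha$. The restriction $\alpha\le 4$ is exactly where this remains feasible: the Dirichlet conditions only prescribe $u,u',\ldots,u^{(\alpha-1)}$ at $r=1$, and for $\alpha\ge 5$ the number of uncontrolled boundary terms generated by the iterated integration by parts exceeds the available algebraic freedom to absorb them, producing the ``loss of information'' phenomenon mentioned in the introduction. This contrasts with Navier-type boundary conditions, for which the cascade structure $u=\Delta u=\cdots=\Delta^{\alpha-1}u=0$ on $\partial B_1$ allows the argument to close uniformly in $\alpha$.
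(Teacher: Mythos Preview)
Your outline has the right skeleton in Step~1 and in the first half of Step~2: radial symmetry from the Boggio/Green-function positivity and moving planes, followed by the rescaling $\tilde w(r)=\lambda^{2\alpha/(p-1)}w(\lambda r)$ normalized so that $\tilde w(0)=u(0)$, with the endgame being a Gronwall argument once all the initial data $\Delta^k u(0)=\Delta^k\tilde w(0)$, $k=0,\dots,\alpha-1$, are shown to match. That is exactly the architecture of the paper's proof.

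The genuine gap is in how you propose to force $\Delta^k u(0)=\Delta^k\tilde w(0)$. You suggest testing the difference against a ``carefully chosen multiplier'' to produce a Pohozaev-type identity with a definite sign. No such identity is constructed, and it is not clear one exists: Pohozaev identities for these equations constrain the exponent $p$, not the difference of two solutions sharing $u(0)$. The paper does something entirely different. Assuming some $\Delta^k(u-\tilde w)(0)\neq 0$, it tracks the signs of the full vector $\bigl(u-\tilde w,\Delta(u-\tilde w),\dots,\Delta^{\alpha-1}(u-\tilde w)\bigr)$ along $[0,\min\{1,1/\lambda\}]$, using the maximum principle iteratively to generate a monotone sequence $R_1<R_2<\cdots$ of radii at which exactly one (or two) of these components vanishes and flips sign. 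If the sequence is infinite, continuity forces all components and their derivatives to vanish at the limit, and Gronwall gives a contradiction. If it is finite, the terminal sign pattern is compared against a detailed \emph{shape lemma} (Lemma~\ref{lem:shape}) describing exactly how many zeros and critical points each $\Delta^s u$ has on $(0,1)$ and the signs of $\Delta^s u(1)$, $(\Delta^s u)'(1)$; combined with the Hopf-type boundary information of Lemma~\ref{HopfPoly}, every possible terminal pattern is ruled out.

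The restriction $\alpha\le 4$ is not about uncontrolled boundary terms in an integration by parts, as you suggest, but is purely combinatorial: one must exclude the ``bad'' terminal configuration $(-\Delta)^k(u-\tilde w)<0$ for $k\le\bar k$ and $(-\Delta)^{\bar k+1}(u-\tilde w)>0$ by counting zeros column by column (Lemma~\ref{lem:zeros}), and this zero-counting closes only up to $\alpha=4$. Your proposal would need to replace the vague multiplier idea with this sign-tracking machinery and the accompanying shape analysis.
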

\noindent Notice that in the case $\alpha=1$, \eqref{eqnDir} reduces to the Lane--Emden equation \eqref{GNN} and for $\alpha=2$ to the biharmonic equation \eqref{biheq}. The boundary conditions in \eqref{eqnDir} are the higher order Dirichlet boundary conditions for which the polyharmonic operator fails to be the power of the Laplace operator. Those conditions are particularly relevant form the point of view of applications, see \cite{GazzolaGrunauSweers10}, as well as from the theoretical point of view, as they prevent to use reduction methods, such as decomposing the equation into a system of lower order equations. Actually the result of Theorem \ref{teo:eqnDir} is stronger, in the sense that yields uniqueness of solutions in the sharp range of existence $1<p<(N+2\alpha)/(N-2\alpha)$ as a consequence of \cite[Theorem 8]{pucci_serrin} and \cite[Theorems 7.17--7.18]{GazzolaGrunauSweers10}.

\noindent We then extend Theorem \ref{teo:eqnDir} to systems of polyharmonic equations as follows
\begin{theorem}\label{teo:sysDir}
There exists at most one nontrivial solution to
\begin{equation}\label{sysDir}
\begin{cases}
(-\Delta)^{\alpha_j} u_j=\abs{u_{j+1}}^{p_j},\, j =1, \dots, m-1  \\
&\, \text{ in } B_1,\\
(-\Delta)^{\alpha_m} u_m=\abs{u_{1}}^{p_m} \\
\frac{\partial^k u_j}{\partial \nu^k}=0, \, k=0, \dots, \alpha_j-1, \, j =1, \dots, m,   & \, \text{ on } \partial B_1, 
\end{cases}
\end{equation}
 with $p_j \ge 1$ for any $j$, $\prod_{j=1}^m p_j >1$, $N > 2 \max \{ \alpha_j \}_j$ and $1 \le \alpha_j\le 4$ for any $j =1, \dots, m$, where $m \ge 1$.
Moreover, let $\alpha_j \in \mathbb{N}$ and $p_j >1$ for any $j$. Then, there exists a classical nontrivial solution to \eqref{sysDir} if there exists $l \in \{ 1, \dots, m \}$ such that
\begin{equation}\label{SerrinMulti}
N+2 \sum_{k=1}^m \alpha_{k+l} \prod_{j=0}^{k-1} p_{j+l} - N \prod_{j=1}^m p_j \ge 0,
\end{equation}
where $p_{k+m}:=p_k$ and $\alpha_{k+m}:=\alpha_k$ for any $k=1, \dots, m$.
\end{theorem}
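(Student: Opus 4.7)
The theorem splits into two claims: uniqueness under $1\le\alpha_j\le 4$, and existence under \eqref{SerrinMulti}. I would handle them separately.

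\textbf{Uniqueness.} The plan is to extend Theorem~\ref{teo:eqnDir} to the system setting, following the scheme combining moving planes with a Dalmasso-type scaling argument as used in \cite{Schiera18} for the two-equation case. First, I would apply the method of moving planes to the polyharmonic Dirichlet system to show that every classical solution is radially symmetric about the origin. For higher-order systems this relies on the sign properties of the Green function of $(-\Delta)^{\alpha_j}$ on the ball, combined with standard adaptations of the moving-plane technique for coupled systems (in the spirit of Troy, Busca--Sirakov, de Figueiredo and coauthors, and Berchio--Gazzola--Weth for the higher order setting). The same ingredients also force each $u_j$ to be, up to sign, positive and strictly radially decreasing.

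Once radial symmetry is in place, the system \eqref{sysDir} reduces to a coupled ODE system in $r=|x|$ with Dirichlet data at $r=1$ and regularity conditions at $r=0$. Assuming for contradiction that $(U_j)$ and $(V_j)$ are two distinct nontrivial radial solutions, I would perform a Dalmasso-type rescaling $U_j(r)\mapsto\lambda^{\gamma_j}U_j(\lambda r)$, with the exponents $\gamma_j$ determined by the cyclic system $\gamma_j+2\alpha_j=p_j\gamma_{j+1}$ forced by invariance of the equations, in order to align the two solutions at an interior point. One then concludes by invoking Theorem~\ref{teo:eqnDir} componentwise: this is where the restriction $1\le\alpha_j\le 4$ becomes essential, since the single-equation uniqueness result is available only in that range. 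The degenerate case $p_j=1$ for some $j$ (allowed by the hypothesis $p_j\ge 1$, $\prod p_j>1$) requires a separate, simpler linear argument, which reduces to the previous case after a finite number of cyclic substitutions.

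\textbf{Existence.} Under \eqref{SerrinMulti}, which is the natural multi-exponent Sobolev hyperbola condition for the polyharmonic Lane--Emden system, I would work in the class of radial functions and set up a topological argument. Inverting each equation against the positive Green function of $(-\Delta)^{\alpha_j}$ with Dirichlet data on $B_1$, write $u_j=T_j(u_{j+1})$ and obtain a fixed-point problem for the compact composition $T=T_1\circ\dots\circ T_m$ on the cone of nonnegative radial functions. A priori estimates deduced from \eqref{SerrinMulti}, through a blow-up analysis \emph{à la} Serrin--Zou ruling out nontrivial entire solutions in the subcritical regime, combined with Krasnoselskii's fixed point theorem in cones (or equivalently with Leray--Schauder degree), then yield a nontrivial fixed point and hence a classical nontrivial solution of \eqref{sysDir}.

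\textbf{Main obstacle.} The most delicate step is the uniqueness argument: the Dalmasso-type scaling reduces the system question to single-equation uniqueness, but one has to carefully track the coupling and the fact that the boundary conditions are Dirichlet (not Navier), which is exactly what forces $\alpha_j\le 4$. For existence, the main technical point is obtaining uniform a priori bounds under the general condition \eqref{SerrinMulti}; since the non-Navier boundary data preclude a clean reduction to a second-order Lane--Emden system, these bounds must be derived directly via a Pohožaev-type identity adapted to the full polyharmonic system.
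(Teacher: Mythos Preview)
Your existence sketch is close to the paper's argument: the paper also proceeds by contradiction via a topological continuation (the Azizieh--Cl\'ement fixed-point lemma), obtains an unbounded sequence of solutions to an auxiliary parametrized system, rescales and blows up to a nontrivial entire solution of the limit system on $\mathbb{R}^N$, and then invokes a Liouville theorem of Mitidieri--Pohozaev type, valid precisely under \eqref{SerrinMulti}. The a priori bounds are thus obtained by blow-up plus Liouville rather than directly from a Poho\v{z}aev identity, but the overall architecture matches what you describe.

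The uniqueness argument, however, has a genuine gap. After the Dalmasso rescaling you cannot ``invoke Theorem~\ref{teo:eqnDir} componentwise'': the $j$-th equation reads $(-\Delta)^{\alpha_j}u_j=|u_{j+1}|^{p_j}$, so the right-hand side involves a \emph{different} unknown, and Theorem~\ref{teo:eqnDir} (which concerns $(-\Delta)^\alpha u=|u|^p$) simply does not apply to any single component. No amount of scaling decouples the cycle. What the paper actually does is run the Dalmasso--Gronwall machinery on the \emph{full coupled vector}: one fixes $\lambda$ so that $u_1(0)=\tilde w_1(0)$, forms a single table whose columns are
\[
u_1-\tilde w_1,\ \Delta(u_1-\tilde w_1),\ \dots,\ \Delta^{\alpha_1-1}(u_1-\tilde w_1),\ \pm(u_2-\tilde w_2),\ \dots,\ \Delta^{\alpha_m-1}(\pm(u_m-\tilde w_m)),
\]
and tracks sign changes across this entire list, producing a sequence $\{R_k\}$. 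If the sequence is infinite one applies Gronwall to the $\bigl(\sum_j\alpha_j\bigr)$-dimensional first-order system; if it is finite, one uses the zero-counting Lemma (Lemma~\ref{lem:zeros} and Remark~\ref{rmk:zeros}) on each consecutive block of $\alpha_j$ columns to compare the number of zeros of the first and $(\alpha_1+1)$-th columns, forcing $u_1-\tilde w_1$ and $u_2-\tilde w_2$ to have opposite signs at $\min\{1,1/\lambda\}$, which yields contradictory inequalities on $\lambda$. The restriction $\alpha_j\le 4$ enters because Lemma~\ref{lem:zeros} is only proved for blocks of length at most $4$; it is this lemma, not the statement of Theorem~\ref{teo:eqnDir}, that is reused from Section~\ref{sec:eqn}. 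Your outline is missing precisely this coupled sign-table analysis, which is the substantive content of the proof.
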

\noindent If $m=1$ then \eqref{sysDir} reduces to \eqref{eqnDir} with $\alpha=\alpha_1$, and \eqref{SerrinMulti} coincides with the Serrin exponent $\frac{N}{N-2\alpha}$. In the case $m=2$, $\alpha_1=\alpha$, $\alpha_2=\beta$, $p_1=q$, $p_2=p$, \eqref{SerrinMulti} reduce to the Serrin curves:
\[ 2\beta q + N + 2\alpha pq - N pq \ge 0, \quad 2\alpha p + N + 2\beta pq - N pq \ge 0. \]
\noindent Notice that for $m=2$, $\alpha_1=1$ and $\alpha_2=1$, \eqref{sysDir} reduces to \eqref{hsys} whereas when $m=2$, $\alpha_1=2$ and $\alpha_2=1$ we have \eqref{deliasys}.

\noindent As we are going to see, when trying to extend the proof of Theorem \ref{teo:eqnDir} to the case $\alpha \ge 5$, one has to face technical difficulties due to the fact that Dirichlet boundary conditions prescribe the behavior only of the first $\alpha-1$ derivatives of the solution, and no information apparently can be retained for higher order derivatives. However, in this context new boundary conditions show up in a natural fashion for which we have the following
\begin{corollary}\label{teo:Var}
There exists at most one nontrivial solution to
\begin{equation}\label{sys:Var}
\begin{cases}
(-\Delta)^{\alpha} u=\abs{u}^{p}, & \text{ in } B_1,   \\
\Delta^{2k} u=0,\, 2k \le \alpha-1,  &  \text{ on } \partial B_1 \\
\frac{\partial}{\partial \nu} \Delta^{2k} u= 0, \,  2k+1 \le \alpha-1,   &\text{ on } \partial B_1
\end{cases}
\end{equation}
with $N > 2  \alpha$, $p > 1$ and $\alpha \in \mathbb{N}$, $\alpha\geq 1$. 
%Moreover, there exists a nontrivial classical solution to \eqref{sys:Var} provided $1 < p < \frac{N+2\alpha}{N-2\alpha}$.
\end{corollary}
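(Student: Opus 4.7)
The plan is to reduce problem \eqref{sys:Var} to a Lane--Emden type system of the form \eqref{sysDir} and then invoke Theorem \ref{teo:sysDir}. Write $\alpha=2\beta$ in the even case and $\alpha=2\beta+1$ in the odd case, and introduce the auxiliary functions
\[
v_j := \Delta^{2(j-1)}u, \qquad j=1,\dots,\beta,
\]
together with $v_{\beta+1}:=\Delta^{2\beta}u$ when $\alpha$ is odd. A direct inspection of the boundary conditions in \eqref{sys:Var} shows that $v_j=\partial_\nu v_j=0$ on $\partial B_1$ for every $j=1,\dots,\beta$, while in the odd case only $v_{\beta+1}=0$ is prescribed. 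The polyharmonic equation $(-\Delta)^\alpha u=\abs{u}^p$ then becomes the cascade
\[
(-\Delta)^2 v_j=v_{j+1}\,\,(j=1,\dots,\beta-1),\qquad (-\Delta)^2 v_\beta=\abs{v_1}^p
\]
in the even case, and
\[
(-\Delta)^2 v_j=v_{j+1}\,\,(j=1,\dots,\beta),\qquad -\Delta v_{\beta+1}=\abs{v_1}^p
\]
in the odd case.

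Next, I would establish $v_j\ge 0$ for every $j$ by a downward induction along the cascade. The last equation has the nonnegative right-hand side $\abs{v_1}^p$, so either $v_{\beta+1}\ge 0$ (odd case) by the weak maximum principle for $-\Delta$, or $v_\beta\ge 0$ (even case) by the positivity preserving property of the biharmonic Dirichlet problem on the ball, encoded in Boggio's explicit Green representation. Stepping backwards, each biharmonic Dirichlet problem $(-\Delta)^2 v_j=v_{j+1}\ge 0$ forces $v_j\ge 0$ by Boggio once more.

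With positivity secured, the cascade is a system of the form \eqref{sysDir}, with orders $\alpha_j\in\{1,2\}$ (in particular $\alpha_j\le 4$), exponents $(p_j)=(1,\dots,1,p)$ and $m=\beta$ or $m=\beta+1$ depending on the parity of $\alpha$: indeed, once $v_{j+1}\ge 0$, one has $v_{j+1}=\abs{v_{j+1}}^{1}$. The product of exponents equals $p>1$, and the dimensional restriction $N>2\alpha\ge 2\max_j\alpha_j$ is inherited from the hypothesis. Theorem \ref{teo:sysDir} then gives uniqueness for the reduced system, in particular for $u=v_1$.

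The main technical point is the positivity cascade, which relies crucially on the positivity preserving property of the biharmonic Dirichlet problem on the ball -- available through Boggio's formula but known to fail in general domains. The rest is bookkeeping: the boundary conditions in \eqref{sys:Var} are precisely those that make $(-\Delta)^\alpha$ split into an iterated composition of biharmonic (and, in the odd case, one Laplace) Dirichlet operators, which is exactly the structural feature that is missing under full Dirichlet conditions for $\alpha\ge 5$.
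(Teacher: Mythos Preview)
Your argument is correct and follows essentially the same route as the paper: split $(-\Delta)^\alpha$ into a cascade of biharmonic (and, for odd $\alpha$, one Laplace) Dirichlet problems via $v_j=\Delta^{2(j-1)}u$, recognise the resulting system as a special case of \eqref{sysDir} with $\alpha_j\in\{1,2\}$ and exponents $(1,\dots,1,p)$, and invoke Theorem~\ref{teo:sysDir}. Your treatment is in fact more explicit than the paper's, which writes the reduced system directly with absolute values $\Delta^2 u_j=\abs{u_{j+1}}$ without spelling out why $u_{j+1}\ge 0$; your downward induction using Boggio's positivity for the biharmonic Dirichlet problem on the ball (and the usual maximum principle for $-\Delta$ in the odd case) fills precisely this gap.
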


\noindent Boundary conditions considered in \eqref{sys:Var}, on one side from the mathematical point of view enable us to split the equation into a system of equations subject to Dirichlet boundary conditions, on the other side, the Physical constraint makes vanishing higher order momenta along the boundary.

\medskip

\noindent As far as we are concerned with the so-called Navier boundary conditions, for which the polyharmonic operator is actually a power of the Laplacian and classical reduction methods apply, we have as byproduct of the previous results the following
\begin{corollary}\label{teo:Navier}
There exists at most one nontrivial solution to
\begin{equation}\label{sys:Nav}
\begin{cases}
\begin{aligned}
&(-\Delta)^{\alpha_j} u_j=\abs{u_{j+1}}^{p_j},\, j =1, \dots, m-1,  \\
&&\\
&(-\Delta)^{\alpha_m} u_m=\abs{u_{1}}^{p_m},
\end{aligned}  \hspace{-.2cm}\text{ in } B_1 ,  \\
\\
\Delta^k u_j=0, \, k=0, \dots, \alpha_j-1, \, j =1, \dots, m  \text{ on } \partial B_1
\end{cases}
\end{equation}
with $p_j \ge 1$ for any $j$, $\prod_{j=1}^m p_j >1$, $\alpha_j \in \mathbb{N}$, $m \ge 1$ and $N > 2 \max \{ \alpha_j \}_j$.
\end{corollary}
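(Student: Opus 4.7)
The plan is to exploit the distinguishing feature of Navier boundary conditions, namely that they decouple the polyharmonic operator into a cascade of Poisson equations each carrying its own homogeneous Dirichlet datum; this reduces \eqref{sys:Nav} to a cycle of second--order Lane--Emden problems to which \autoref{teo:sysDir} already applies.

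Concretely, I would introduce the intermediate variables $v_j^{(k)}:=(-\Delta)^{k}u_j$ for $j=1,\ldots,m$ and $k=0,\ldots,\alpha_j-1$. The Navier conditions $\Delta^{k}u_j=0$ on $\partial B_1$ translate exactly into $v_j^{(k)}=0$ on $\partial B_1$ for each such pair $(j,k)$, and the original PDE rewrites as the cyclic chain $-\Delta v_j^{(k)}=v_j^{(k+1)}$ for $k\le\alpha_j-2$, together with $-\Delta v_j^{(\alpha_j-1)}=\abs{v_{j+1}^{(0)}}^{p_j}$, under the cyclic convention $v_{m+1}^{(0)}:=v_1^{(0)}$. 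Climbing down this chain with the scalar maximum principle, the nonnegativity of $\abs{v_{j+1}^{(0)}}^{p_j}$ forces $v_j^{(\alpha_j-1)}\ge 0$ in $B_1$, and nonnegativity then propagates inductively to every $v_j^{(k)}$, in particular to $u_j=v_j^{(0)}$, so that all absolute values can be dropped once and for all.

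Relabelling the $M:=\sum_{j=1}^m\alpha_j$ functions in cyclic order, the reduced system takes the form
\[ -\Delta w_\ell = w_{\ell+1}^{\,q_\ell},\qquad w_\ell=0 \text{ on } \partial B_1,\qquad \ell=1,\ldots,M, \]
with indices read modulo $M$, where $q_\ell=1$ at all but $m$ indices and $q_\ell$ coincides with one of the original exponents $p_j$ at the remaining $m$ transition indices. Hence $q_\ell\ge 1$, $\prod_{\ell=1}^{M} q_\ell=\prod_{j=1}^m p_j>1$, every order equals $1\le 4$, and $N>2\max_j\alpha_j\ge 2$, so \autoref{teo:sysDir} applies and yields uniqueness of the $w_\ell$, hence of $(u_1,\ldots,u_m)$.

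The main delicate point, more than a genuine obstacle, is legitimizing the reduction inside the classical framework adopted throughout the paper: one must check that the $v_j^{(k)}$ inherit enough regularity to accommodate both the maximum principle and the hypotheses of \autoref{teo:sysDir}. Since the source $\abs{u_{j+1}}^{p_j}$ is continuous, standard elliptic regularity iterated along the chain gives $v_j^{(k)}\in C^{2}(B_1)\cap C(\overline{B_1})$, which is exactly what is needed. Under the Dirichlet conditions of \eqref{sysDir}, by contrast, the intermediate $v_j^{(k)}$ would no longer vanish on $\partial B_1$ and the reduction breaks down, which is precisely the obstruction emphasized in the introduction.
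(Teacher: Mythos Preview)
Your proposal is correct and follows essentially the same route as the paper: both reduce \eqref{sys:Nav} to a cyclic system of second--order equations with Dirichlet boundary data by introducing the intermediate unknowns $(-\Delta)^{k}u_j$, and then invoke \autoref{teo:sysDir} with all orders equal to $1$. The only cosmetic difference is that the paper appeals to \cite[Theorem~7.3]{GazzolaGrunauSweers10} for positivity, whereas you obtain it directly by iterating the scalar maximum principle along the chain; both justifications are adequate and lead to the same reduced system.
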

\noindent We mention that here the case $m=3$ and $\alpha_j=1$ was covered in \cite{CuiWangShi07}. Nonexistence results above the critical curve for \eqref{sys:Nav}, in the variational case $m=2$, $\alpha_1=\alpha_2$ have been established in \cite{liu_guo_zhang}. Existence of solutions below the critical curve follows buying the line of \cite{clement_felmer_mitidieri}, as it has been detailed in \cite{Schiera19} where also the non-variational case is tackled.

%--------------------------------------------------------------------------------------------------------------------------
\section{Polyharmonic equations with Dirichlet boundary conditions: proof of Theorem \ref{teo:eqnDir}}\label{sec:eqn}%------------
%--------------------------------------------------------------------------------------------------------------------------

\noindent Let us first recall the following preliminary results:
\begin{lemma}[Theorem 5.7 in \cite{GazzolaGrunauSweers10}]\label{HopfPoly}
Let $u$ be a nontrivial solution to \eqref{eqnDir}. Then $u>0$ on $B_1$ and for every $x \in \partial B_1$ one has
\[ \begin{cases}
\Delta^{\alpha/2} u(x) >0, & \text{ for $\alpha$ even,}\\
-\frac{\partial}{\partial \nu} \Delta^{(\alpha-1)/2} u(x) >0, & \text{ for $\alpha$ odd.}
\end{cases} \]
\end{lemma}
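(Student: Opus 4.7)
The plan is to derive both statements from Boggio's explicit Green's function $G_\alpha(x,y)$ for $(-\Delta)^\alpha$ on $B_1$ under the Dirichlet data $\partial_\nu^k u|_{\partial B_1}=0$, $k=0,\dots,\alpha-1$. Its defining features are that it is strictly positive on $B_1\times B_1$ and that near the boundary $G_\alpha(x,y)\asymp d(x)^\alpha d(y)^\alpha / |x-y|^{N}$, where $d(z)=1-|z|$. First I would invoke the representation
\[
u(x)=\int_{B_1} G_\alpha(x,y)\,|u(y)|^{p}\,dy,
\]
which holds by uniqueness for the linear polyharmonic Dirichlet problem and by the $C^{2\alpha}$-regularity of the classical solution $u$. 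Since $|u|^p\geq 0$ and $u\not\equiv 0$, the strict positivity of $G_\alpha$ gives $u>0$ in $B_1$, while the boundary asymptotics yield a uniform lower bound $u(x)\geq C\,d(x)^{\alpha}$ for some $C>0$ in a neighbourhood of $\partial B_1$.

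Next I would turn this lower bound into the sign of the distinguished boundary derivative. The vanishing of $u,\partial_\nu u,\dots,\partial_\nu^{\alpha-1}u$ on $\partial B_1$ implies that $u$ vanishes to order exactly $\alpha$ at every boundary point, so we may factor $u(x)=(1-|x|)^{\alpha}g(x)$ with $g\in C(\overline{B_1})$ and $g|_{\partial B_1}\geq C>0$. This yields $\partial_r^{\alpha}u|_{\partial B_1}=(-1)^{\alpha}\alpha!\,g|_{\partial B_1}$, with a definite sign depending on the parity of $\alpha$. To connect $\partial_r^{\alpha}u$ with $\Delta^{\alpha/2}u$ or $\partial_\nu\Delta^{(\alpha-1)/2}u$, I would use the polar decomposition $\Delta=\partial_r^2+\frac{N-1}{r}\partial_r+r^{-2}\Delta_{S^{N-1}}$. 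Since the Dirichlet conditions also force all spherical (tangential) derivatives of $u,\partial_\nu u,\dots,\partial_\nu^{\alpha-1}u$ to vanish on $\partial B_1$, induction on $k$ gives $\Delta^{k} u|_{\partial B_1}=\partial_r^{2k}u|_{\partial B_1}$ and $\partial_r\Delta^{k} u|_{\partial B_1}=\partial_r^{2k+1}u|_{\partial B_1}$ as long as $2k,2k+1\leq \alpha$. Substituting $\partial_r^{\alpha}u|_{\partial B_1}=(-1)^{\alpha}\alpha!\,g|_{\partial B_1}$ delivers the two cases of the claim, the extra sign from the outward normal producing the minus sign in the odd case.

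The main obstacle is concentrated in the first step: justifying the representation formula with a strictly positive kernel and with sharp boundary decay. This positivity of $G_\alpha$ fails for generic smooth domains already when $\alpha=2$; on the ball it is the content of Boggio's classical explicit computation, which is precisely the input imported from the Gazzola--Grunau--Sweers monograph. Once Boggio's formula and its near-boundary asymptotics are granted, the passage to the boundary sign is a local polar-coordinate exercise, and the entire conclusion follows.
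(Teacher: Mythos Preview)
The paper does not give its own proof of this lemma; it is quoted as Theorem~5.7 of the Gazzola--Grunau--Sweers monograph and used as a black box throughout. Your sketch is correct and is essentially the argument behind that theorem: Boggio's explicit Green function on the ball supplies both the strict positivity $u>0$ and the sharp lower bound $u(x)\ge C\,d(x)^\alpha$ near $\partial B_1$, from which the sign of the $\alpha$-th normal derivative follows; the polar--coordinate reduction identifying $\Delta^{\alpha/2}u$ (respectively $\partial_\nu\Delta^{(\alpha-1)/2}u$) with $\partial_r^\alpha u$ on $\partial B_1$, using that all terms with fewer than $\alpha$ radial derivatives vanish by the Dirichlet data, is exactly as you describe. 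One small imprecision worth flagging: the asymptotic $G_\alpha(x,y)\asymp d(x)^\alpha d(y)^\alpha/|x-y|^N$ is only the correct two-sided bound in the regime $d(x)d(y)\lesssim|x-y|^2$; the global estimate is $G_\alpha(x,y)\asymp|x-y|^{2\alpha-N}\min\bigl(1,\,d(x)^\alpha d(y)^\alpha/|x-y|^{2\alpha}\bigr)$. However, the regime you quote is precisely the one needed for the lower bound $u\ge C\,d^\alpha$ (take $y$ in a fixed compact subset of $B_1$ and $x$ near the boundary), so the argument goes through unchanged.
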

\begin{lemma}[Theorem 7.1 in \cite{GazzolaGrunauSweers10}]\label{lem:rad}
Let $u$ be a nontrivial solution to \eqref{eqnDir}.
Then it is radially symmetric and strictly decreasing in the radial variable.
\end{lemma}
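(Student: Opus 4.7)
The plan is to apply the method of moving planes, but to do so on the integral equation equivalent to \eqref{eqnDir} rather than pointwise. By \autoref{HopfPoly} the solution is strictly positive in $B_1$, and the Green's function $G_{\alpha,N}(x,y)$ of $(-\Delta)^\alpha$ on $B_1$ with Dirichlet boundary data is given by Boggio's explicit formula, which is strictly positive on $B_1\times B_1$. Consequently, \eqref{eqnDir} can be recast as
\[
u(x)=\int_{B_1} G_{\alpha,N}(x,y)\,u(y)^p\,dy,\qquad x\in B_1.
\]
The comparison must be performed at the level of this integral representation, since the polyharmonic Dirichlet maximum principle fails on generic subdomains of $B_1$ already for $\alpha\geq 2$.

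Fix a unit vector $e\in\mathbb{R}^N$, and for $\lambda\in(0,1)$ set $T_\lambda=\{x\cdot e=\lambda\}$, $\Sigma_\lambda=\{x\in B_1:x\cdot e>\lambda\}$; denote by $x^\lambda$ the reflection of $x$ across $T_\lambda$, and let $w_\lambda(x):=u(x^\lambda)-u(x)$. Splitting the integral equations for $u(x)$ and $u(x^\lambda)$ on $\Sigma_\lambda$ and its reflected complement, and exploiting the reflection symmetry together with the reflection monotonicity of Boggio's kernel on $\Sigma_\lambda$, I expect to arrive at an inequality of Hardy-Littlewood-Sobolev type of the form
\[
\|w_\lambda^-\|_{L^{p+1}(\Sigma_\lambda)}\;\leq\; C\,|\Sigma_\lambda|^{\theta}\,\|w_\lambda^-\|_{L^{p+1}(\Sigma_\lambda)},
\]
where $w_\lambda^-=\max\{-w_\lambda,0\}$ and $\theta>0$. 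For $\lambda$ close to $1$ the prefactor is smaller than $1$, forcing $w_\lambda\geq 0$ in $\Sigma_\lambda$, and the procedure is initialized. Sliding $\lambda$ downward and defining $\lambda_0=\inf\{\lambda\in(0,1):w_\mu\ge 0\text{ on }\Sigma_\mu\ \text{for all }\mu\ge\lambda\}$, the strict form of the reflection inequality of Boggio's kernel combined with a continuity argument excludes $\lambda_0>0$. Hence $\lambda_0=0$, giving $u(x)=u(x^0)$ for every direction $e$, which yields radial symmetry. Strict monotonicity in $r$ then follows from the strict inequality $w_\lambda>0$ in the interior of $\Sigma_\lambda$ for each $\lambda>0$, combined with the Hopf-type information provided by \autoref{HopfPoly}.

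The central obstacle, and what distinguishes this setting from the original Gidas-Ni-Nirenberg scheme, is precisely the lack of a pointwise maximum principle on the caps $\Sigma_\lambda$ for $(-\Delta)^\alpha$ under Dirichlet conditions when $\alpha\geq 2$: the classical pointwise comparison of $w_\lambda$ through elliptic inequalities simply breaks down. Replacing pointwise information by the positivity and reflection monotonicity of Boggio's kernel is exactly the device that makes the argument go through for every $\alpha\geq 1$, and verifying those kernel inequalities (notably the key reflection comparison $G_{\alpha,N}(x^\lambda,y^\lambda)\ge G_{\alpha,N}(x,y^\lambda)$ for $x,y\in\Sigma_\lambda$) is the true technical heart of the proof.
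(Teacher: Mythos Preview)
The paper does not supply its own proof of this lemma; it is recorded as a preliminary result and attributed to Theorem~7.1 of \cite{GazzolaGrunauSweers10}. Your outline is essentially the strategy carried out in that reference (originating with Berchio--Gazzola--Weth): recast \eqref{eqnDir} as an integral equation via Boggio's explicit, strictly positive Green function and run the moving-plane method on that representation, replacing the unavailable pointwise maximum principle on caps by the reflection inequality for the kernel. Your identification of the kernel comparison $G_{\alpha,N}(x,y)\le G_{\alpha,N}(x^\lambda,y)$ for $x,y\in\Sigma_\lambda$ (with strict inequality off the hyperplane) as the technical heart is accurate.

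One minor divergence worth noting: the proof in \cite{GazzolaGrunauSweers10} does not pass through an HLS-type $L^{p+1}$ smallness estimate. It argues more directly by assuming the negativity set of $w_\lambda$ is nonempty, evaluating the integral identity at an interior minimum point, and deriving a contradiction from the strict sign of the kernel difference; the continuation step is handled the same way. Your HLS variant is closer in spirit to the Chen--Li--Ou treatment of integral equations on $\mathbb{R}^N$ and would also go through here because $u\in C(\overline{B_1})$ is bounded, so $u^p-(u^\lambda)^p$ is controlled pointwise by $C\,w_\lambda^-$ on the bad set. Either route yields the same conclusion; the direct argument is slightly shorter and avoids invoking mapping properties of the Green operator.
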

\noindent We next prove a key ingredient for what follows:
\begin{lemma}\label{lem:shape}
Let $u$ be a nontrivial solution to \eqref{eqnDir}. Then, $\Delta^s u(0)<0$ if $1 \le s < \alpha$ is odd, and in this case it is increasing until the first zero, $\Delta^s u(0)>0$ if $1 \le s<\alpha$ is even, and in this case it is decreasing up to the first zero.
Moreover, if $\alpha \ge 2$ is even, then the following properties hold:
\begin{itemize}
\item $\Delta^{\alpha-j} u$ has exactly $\alpha-j+1$ zeros (including the last one in $r=1$) and $\alpha-j$ critical points in $(0, 1)$ if $\alpha-1 \ge j \ge \alpha/2+1$, exactly $j$ zeros and $j-1$ critical points in $(0, 1)$ if $1 \le j \le \alpha/2$;
\item $\Delta^s u(1)=0$ if $s \le  \alpha/2 -1 $, $\Delta^s u(1) > 0$ if $s \ge \alpha/2$, and $(\Delta^s u)'(1)=0$ if $s \le \alpha/2-1$,  $(\Delta^s u)'(1)\ge 0$ if $s \ge  \alpha/2$.
\end{itemize}
\noindent If $\alpha \ge 3$ is odd, then we have:
\begin{itemize}
\item $\Delta^{\alpha-j} u$ has exactly $\alpha-j+1$ zeros (including the last one in $r=1$) and $\alpha-j$ critical points in $(0, 1)$ if $\alpha-1 \ge j \ge (\alpha+1)/2$, exactly $j$ zeros and $j-1$ critical points in $(0, 1)$ if $1 \le j \le (\alpha-1)/2$;
\item $\Delta^s u(1)=0$ if $s \le (\alpha-1)/2$, $\Delta^s u(1) < 0$ if $s \ge (\alpha+1)/2$, and $(\Delta^s u)'(1)=0$ if $s \le (\alpha-3)/2$,  $(\Delta^s u)'(1) \le 0$ if $s \ge (\alpha-1)/2$.
\end{itemize}
\noindent (See \autoref{fig:Delta}).
\end{lemma}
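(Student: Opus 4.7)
Write $v_s := \Delta^s u$ for $0 \le s \le \alpha$, so $v_0 = u$ and, by the equation together with Lemma~\ref{lem:rad}, $v_\alpha = (-1)^\alpha u^p$ with $u > 0$ strictly radially decreasing. In radial coordinates the identity $(r^{N-1} v'_{s-1}(r))' = r^{N-1} v_s(r)$, integrated from $r=0$, reads
\[
v'_{s-1}(r) = \frac{V_s(r)}{r^{N-1}}, \qquad V_s(r) := \int_0^r t^{N-1} v_s(t)\, dt,
\]
with $V_s(0) = 0$ and $V'_s(r) = r^{N-1} v_s(r)$. Thus on $(0,1]$ the sign of $v'_{s-1}$ coincides with that of $V_s$, the interior zeros of $V_s$ are precisely the interior critical points of $v_{s-1}$, and the interior zeros of $v_s$ are precisely the interior extrema of $V_s$. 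The Dirichlet conditions $u^{(k)}(1) = 0$ for $0 \le k \le \alpha-1$ translate, by iterating the radial expression $\Delta = \partial_r^2 + \frac{N-1}{r}\partial_r$, into $v_s(1) = 0$ whenever $2s \le \alpha-1$ and $v'_s(1) = 0$ whenever $2s+1 \le \alpha-1$, which gives the ``vanishing'' half of the second and fourth bullets. Finally, Lemma~\ref{HopfPoly} provides the strict sign of the first non-vanishing Dirichlet datum: $v_{\alpha/2}(1) > 0$ if $\alpha$ is even, $v'_{(\alpha-1)/2}(1) < 0$ if $\alpha$ is odd.

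The plan is then to carry out a downward induction on $s$, starting from $s = \alpha$ and descending to $s = 1$. The base case is immediate: $v_\alpha = (-1)^\alpha u^p$ is of constant sign $(-1)^\alpha$ on $(0,1)$, strictly monotone, and vanishes only at $r = 1$. For the inductive step, assume the full shape of $v_s$ is known. The sign-intervals and interior extrema of $V_s$ are then determined by those of $v_s$, and combining $V_s(0) = 0$ with the endpoint value $V_s(1) = v'_{s-1}(1)$ (zero in the Dirichlet range, of prescribed sign at the Hopf level) pins down the exact number of interior zeros of $V_s$, hence of interior critical points of $v_{s-1}$. The sign of $v_{s-1}(0)$ is then read off from
\[
v_{s-1}(0) = v_{s-1}(1) - \int_0^1 \sigma^{1-N} V_s(\sigma)\, d\sigma,
\]
using the known value of $v_{s-1}(1)$ (Dirichlet or Hopf) together with the sign structure of $V_s$; strictness at the origin is preserved because $v_\alpha = \pm u^p$ does not vanish on $(0,1)$, and the integral chain therefore cannot produce a trivial intermediate $v_s$ near $r=0$. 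The critical-point pattern together with the endpoint signs of $v_{s-1}$ then determine the exact number and location of its zeros in $(0,1)$, closing the induction.

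The principal difficulty is the bookkeeping: at every step the zero and critical-point counts must match the lemma \emph{exactly}, not merely as bounds. This works because $V_s$ is strictly monotone between consecutive zeros of $v_s$, so no spurious interior zeros of $V_s$ can appear there, and because the boundary value $V_s(1)$ is precisely what the Dirichlet/Hopf chain prescribes, ruling out spurious zeros near $r=1$ as well. The even-$\alpha$ and odd-$\alpha$ cases run in parallel and differ only in whether Lemma~\ref{HopfPoly} contributes a boundary value or a boundary normal derivative; likewise, the two subcases $j \le \alpha/2$ and $j \ge \alpha/2+1$ correspond to whether the Dirichlet condition still enforces $v_s(1)=0$ or not, with the transition located exactly at the level $s = \alpha/2$ (resp.\ $(\alpha-1)/2$), which is precisely where Lemma~\ref{HopfPoly} provides the needed hinge.
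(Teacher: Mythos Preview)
Your outline assembles the right ingredients---the radial integral identity, the Dirichlet boundary chain, and the Hopf datum from Lemma~\ref{HopfPoly}---but a single downward induction does not close. In the ``high'' range $s > \alpha/2$ (even case) neither $v_s(1)$ nor $v'_s(1) = V_{s+1}(1)$ is prescribed by Dirichlet or by Hopf, so you cannot read off the sign of $V_s(1)$ and hence cannot pin down the exact number of interior zeros of $V_s$ (equivalently, of critical points of $v_{s-1}$). Your formula $v_{s-1}(0) = v_{s-1}(1) - \int_0^1 \sigma^{1-N} V_s(\sigma)\, d\sigma$ has the same defect: in the high range $v_{s-1}(1)$ is part of the conclusion, not a datum, and even when it is known the sign of an integral of a sign-changing $V_s$ is not determined by its qualitative sign pattern alone. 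Concretely, already at the first step $s = \alpha \to \alpha-1$ (say $\alpha = 4$) you only get that $v_{\alpha-1}$ is strictly monotone; nothing in your scheme fixes the signs of $v_{\alpha-1}(0)$ and $v_{\alpha-1}(1)$.

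The paper's argument is in fact a two-pass procedure. A first downward sweep from $s = \alpha-1$ gives only the \emph{upper bounds} ``at most $j$ zeros and $j-1$ critical points'' for $v_{\alpha-j}$. At the hinge level $s = \alpha/2 - 1$ the Dirichlet data $v_s(1) = v'_s(1) = 0$ together with $v_s''(1) = v_{\alpha/2}(1) > 0$ (Hopf) force $v_s$ to be positive and decreasing near $r=1$; a short \emph{parity} argument then shows that the maximal admissible zero count would make $v_s$ increasing there, so one zero is lost. Iterating this loss down to $s = 1$ leaves $\Delta u$ with at most one interior critical point. Here the paper invokes the crucial input you never use: $\Delta u(0) = u''(0) < 0$, which follows from $u'(0)=0$ and $u'<0$ on $(0,1)$ (Lemma~\ref{lem:rad}); combined with the boundary behaviour this yields \emph{exactly} one critical point for $\Delta u$. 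Only then does exactness propagate back \emph{up} through the chain to give the precise counts and the signs of $\Delta^s u(0)$ for all $s$. Without the bottom input $\Delta u(0) < 0$ and the upward pass, your counts remain inequalities rather than equalities.
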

\begin{proof}

\begin{figure}
\centering
\includegraphics[width=0.8\textwidth]{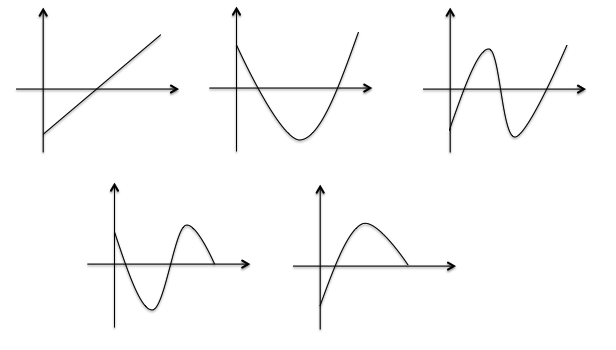}
\caption{\emph{Qualitative graphs of $\Delta^s u(r)$ on the interval $[0,1]$, where $s=5,4,3,2,1$ respectively, and $u$ satisfies \eqref{eqnDir} with $\alpha=6$.}}
\label{fig:Delta}
\end{figure}

\noindent We prove only the case in which $\alpha$ is even, the odd case being similar.  Recall that
\begin{equation}\label{radialLaplace} r^{N-1}(\Delta^{j} u)'(r)=\int_0^r s^{N-1} (\Delta^{j+1} u)(s) \, ds  \end{equation}
for any integer $j\geq 1$.
By \eqref{radialLaplace},  $(\Delta^{\alpha-1} u)'>0$ and as a consequence $\Delta^{\alpha-1} u$ has at most one zero. If $\alpha=2$, then in view of \autoref{HopfPoly} $\Delta u(1)>0$, hence $\Delta u$ has exactly one zero, and the proof is complete.
If $\alpha \ge 4$, then we conclude that $\Delta^{\alpha-2} u $ has at most two zeros. Indeed, again by \eqref{radialLaplace}, it is decreasing up to the endpoint $r_* \ge r_0$, where $r_0$ is such that $\Delta^{\alpha-1} u (r_0)=0$. Notice that if $r_* < 1$, then $(\Delta^{\alpha-2} u )'(r_*)=0$.
Therefore, it holds
\[ r^{N-1}(\Delta^{\alpha-2} u)'(r)=\int_{r_*}^r s^{N-1} (\Delta^{\alpha-1} u)(s) \, ds, \]
and since $\Delta^{\alpha-1} u >0$ beyond $r_* \ge r_0$, then $(\Delta^{\alpha-2} u)'(r) >0$ for any $r \ge r_*$.

\noindent Analogously, one concludes that $\Delta^{\alpha-j} u$ has at most $j$ zeros and $j-1$ critical points in $(0, 1)$, $j \le \alpha-1$.
In particular, $\Delta^{\alpha/2-1} u$ has at most $\alpha/2+1$ zeros and $\alpha/2$ critical points in $(0, 1)$.
Moreover, by Dirichlet boundary conditions, $\Delta^{\alpha/2-1} u(1)=0$, $(\Delta^{\alpha/2 -1} u )'(1)=0$
and $(\Delta^{\alpha/2 -1} u )''(1)=u^{(\alpha)}(1)=\Delta^{\alpha/2} u(1)> 0$ by \autoref{HopfPoly}.
Then, $\Delta^{\alpha/2-1} u$ should be decreasing and positive near 1.

\noindent Now, assume that $\Delta^{\alpha/2-1} u$ has exactly $\alpha/2+1$ zeros and $\alpha/2$ critical points in $(0, 1)$. Then $\Delta^{\alpha/2} u$ must have exactly $\alpha/2$ zeros, and by iteration $\Delta^{\alpha-j} u$ has exactly $j$ zeros, with $j \le \alpha/2+1$.
In particular, this means that $\Delta^{\alpha/2-1} u$ is positive near 0 and has a even number of zeros, if $\alpha/2 - 1$ is even; or it is negative near 0 and has a odd number of zeros, if $\alpha/2 - 1$ is odd. In any case, $\Delta^{\alpha/2-1} u$ should be increasing near $1$, a contradiction. Hence $\Delta^{\alpha/2-1} u$ must have one zero less, namely at most $\alpha/2$ zeros (including also the last one in $r=1$) and at most $\alpha/2 -1$ critical points in $(0, 1)$.

\noindent Now, let us consider $\Delta^{\alpha/2-2} u$. Since $\Delta^{\alpha/2-1} u$ has at most $\alpha/2$ zeros, of which the last one is in $r=1$, then it changes sing at most $\alpha/2$ times, and therefore $\Delta^{\alpha/2-2} u$ has at most $\alpha/2 - 1$ critical points, and $\alpha/2$ zeros in $(0,1)$.
Notice that $\Delta^{\alpha/2-2} u (1)=0$. Moreover, $(\Delta^{\alpha/2-2} u)^{(j)}(1)=0$ for any $j \le 3$ and $(\Delta^{\alpha/2-2} u)^{(4)}(1)=\Delta^{\alpha/2} u(1)> 0$. This means that $\Delta^{\alpha/2-2} u$ is decreasing and positive near 1. However, as above, this is possible only if $\Delta^{\alpha/2 -2} u$ has at most $\alpha/2-1$ zeros (including also the last one in $r=1$) and at most $\alpha/2 -2$ critical points.

\noindent Next we iterate the procedure. Then, at each step we lose one critical point.
Thus, $\Delta^{\alpha-j} u$ has at most $\alpha-j+1$ zeros (including the last one in $r=1$) and $\alpha-j$ critical points in $(0, 1)$ if $j \ge \alpha/2+1$,  at most $j$ zeros and $j-1$ critical points in $(0, 1)$ if $j \le \alpha/2$.
In particular, $\Delta u$ has at most 1 critical point. We know that $\Delta u (0) = u''(0) <0$, as $u'(0)=0$ and $u' <0$ in $(0, 1)$.
We have two cases: $\Delta u$ is increasing and negative, reaches a positive maximum and decreases to 0, or it is always negative and has no critical points.
However, we know that $\Delta u(1)=0$ and $\Delta u$ is decreasing in the last interval, as $(\Delta u)^{(j)}(1)=0$ for any $j \le \alpha -3$ and $(\Delta u)^{(\alpha -2)}(1) = u^{(\alpha)}(1)=\Delta^{\alpha/2} u(1) >0$ by \autoref{HopfPoly}.
Then necessarily $\Delta u$ is increasing and negative, reaches a positive maximum and decreases to 0, namely has exactly one critical point.

\noindent As a consequence, $\Delta^2 u$ has at least 2 critical points, however since it has at most 2 critical points due to what proved above, it turns out to have exactly 2 critical points.
Moreover, $\Delta^2 u(0)>0$, and it is decreasing until the first zero.

\noindent Iteratively, we conclude that $\Delta^{\alpha-j} u$ has exactly $\alpha-j+1$ zeros (including the last one in $r=1$) and $\alpha-j$ critical points in $(0, 1)$ if $j \ge \alpha/2+1$, exactly $j$ zeros and $j-1$ critical points in $(0, 1)$ if $j \le \alpha/2$.
Moreover, $\Delta^s u(0)<0$ if $s$ is odd, and in this case it is increasing until the first zero, $>0$ if $s$ is even, and in this case it is decreasing before the first zero.
Further, by boundary conditions, $\Delta^s u(1)=0$ if $s \le  \alpha/2 -1 $, $\Delta^s u(1) > 0$ if $s \ge \alpha/2$, and $(\Delta^s u)'(1)=0$ if $s \le \alpha/2-1$,  $(\Delta^s u)'(1)\ge 0$ if $s \ge  \alpha/2$.
\end{proof}

%--------------------------------------------------------------------------------------------------------------------------
\subsection{Proof of \autoref{teo:eqnDir} in the case $\alpha=3$}
%-------------------------------------------
\label{sec:3}%----------------------------------------------------------------------------------------------------------
%--------------------------------------------------------------------------------------------------------------------------

Let $u$ be a nontrivial solution to
\begin{equation}\label{eqn3}
\begin{cases}
-\Delta^3 u=\abs{u}^p, &  \text{ in } B_1\\
u=\frac{\partial u}{\partial \nu}=\frac{\partial^2 u}{\partial \nu^2}=0, & \text{ on } \partial B_1.
\end{cases}
\end{equation}
By \autoref{lem:rad} and \autoref{HopfPoly}, $u$ is positive, radially symmetric and strictly decreasing.
In particular, since the maximum is attained at $0$, we have $u'(0)=0$.
Moreover,
\[
r^{N-1}(\Delta^2 u)'(r)=\int_0^r s^{N-1} (\Delta^3 u)(s) \, ds .
\]
As a consequence,
\begin{equation}\label{ce}
(\Delta^2 u)'(0)=\lim_{r \to 0} \frac{\int_0^r s^{N-1} (\Delta^3 u)(s) \, ds}{r^{N-1}}=0.
\end{equation}
Moreover,
\[
r^{N-1}(\Delta u)'(r)=\int_0^r s^{N-1} (\Delta^2 u)(s) \, ds
\]
and therefore
\begin{equation}\label{ce2}
(\Delta u)'(0)=\lim_{r \to 0} \frac{\int_0^r s^{N-1} (\Delta^2 u)(s) \, ds}{r^{N-1}}=0.
\end{equation}

\noindent Let $w$ be another nontrivial solution to \eqref{eqn3} and set
\[ \tilde{w}(r)=\lambda^s w(\lambda r) , \]
 where $s$ is chosen such that $\tilde{w}$ satisfies
 \[ \begin{cases}
 -\Delta^3 \tilde w=\abs{\tilde w}^p, \, r \le 1/\lambda\\
\tilde w(1/\lambda)= \tilde w'(1/\lambda)= \tilde w''(1/\lambda)=0
\end{cases} \]
namely $s=\frac{6}{p-1}$,
whereas $\lambda >0$ is such that
 \begin{equation}\label{lambda}
\tilde{w}(0)= u(0).
 \end{equation}

\noindent \textbf{Claim}:
\begin{equation}\label{claim}
\Delta \tilde w(0)=\Delta u(0), \, \Delta^2 \tilde w(0)=\Delta^2 u(0).
\end{equation}

\noindent Let us suppose for instance $\Delta^2( u- \tilde w)(0)>0$ and $\Delta( u- \tilde w)(0) >0$. Notice that by continuity $\Delta^2( u- \tilde w) >0$ on $[0, \delta)$ and $\Delta( u- \tilde w) >0$ on $[0, \varepsilon)$ for some $\delta, \varepsilon$ sufficiently small. Moreover $u- \tilde w >0$ on $(0, \varepsilon]$: indeed, if there exists $a \le \varepsilon$ such that $u(a)-\tilde w(a) \le 0$, then $\Delta(u-\tilde w)>0$ implies $u- \tilde w <0$ on $[0, a)$, which is a contradiction.

\noindent Hence we can choose $R_1$ such that
\begin{multline*} R_1= \sup \{ r \le \min \{1, 1/\lambda \} : \, (u- \tilde w)(s) >0, \, \Delta( u- \tilde w)(s) >0, \\
 \, \Delta^2(u-\tilde w)(s) >0, \, s \in (0, r) \} .\end{multline*}
We have
\begin{equation}\label{R_1}
(u- \tilde w)(R_1) >0, \, \Delta(u-\tilde w)(R_1)>0.
\end{equation}
Indeed, let us assume by contradiction that $(u-\tilde w)(R_1)=0$. Then, since $\Delta(u-\tilde w) >0$ on $[0, R_1)$ we would have by the maximum principle $u-\tilde w <0$ on $[0, R_1)$.
Analogously, if $\Delta(u-\tilde w)(R_1)=0$, then $\Delta(u-\tilde w) <0$ on $(0, R_1)$, a contradiction.
As a consequence, \eqref{R_1} holds. Moreover, either $R_1<\min \{1, 1/\lambda \}$, and in this case $\Delta^2( u- \tilde w)(R_1)=0$, or $R_1=\min \{1, 1/\lambda \}$.

\noindent In the first case, by applying the maximum principle to $-\Delta^3(u- \tilde w) =u^p- \tilde w^p>0$, one has $\Delta^2( u- \tilde w) <0$ on $(R_1, R_1+ \delta)$ for $\delta$ sufficiently small.
We can set $R_2$ such that
\begin{multline*} R_2= \sup \{ r \le \min \{1, 1/\lambda \} : \, (u- \tilde w)(s) >0, \, \Delta( u- \tilde w)(s) >0, \,  \\
\Delta^2(u-\tilde w)(s) <0, \, s \in (R_1, r) \} .\end{multline*}
As above, we have
\[ \Delta^2( u- \tilde w)(R_2) <0, \, (u-\tilde w)(R_2)>0 \]
and either $R_2<\min \{1, 1/\lambda \}$, which implies $\Delta(u-\tilde w)(R_2)=0$, or $R_2=\min \{1, 1/\lambda \}$. Indeed, if $\Delta^2( u- \tilde w)(R_2)=0$, then by applying the maximum principle to $-\Delta^3(u - \tilde w) =u^p- \tilde w^p>0$ on $B_{R_2} \setminus \overline{B_{R_1}}$ we have $\Delta^2( u- \tilde w) >0$ on $(R_1, R_2)$; on the other hand, if $(u- \tilde w)(R_2)=0$, then $u- \tilde w <0$ on $[0, R_2)$, as $\Delta(u- \tilde w)>0$.

\noindent We now apply iteratively the same reasoning as above to get a sequence (which can be finite or infinite)
\[ 0 =R_0 < R_1 < R_2 < \dots \le \min \{1, 1/\lambda \} \]
such that
\[ u(R_{3k})=\tilde w(R_{3k}), \, \Delta^2 u(R_{3k+1})=\Delta^2 \tilde w(R_{3k+1}), \, \Delta u(R_{3k+2})=\Delta \tilde w(R_{3k+2}), \]
$k \ge 0$, as long as $R_k < \min \{1, 1/\lambda \} $, see \autoref{table3}.

\begin{table}\centering\caption{Sign of $u- \tilde w$, $\Delta(u- \tilde w)$, $\Delta^2( u- \tilde w)$. }
\label{table3}\begin{adjustbox}{max width=\textwidth}
\begin{tabular}{r|cccc}
&$(u-\tilde w)(s)$&$\Delta(u- \tilde w)(s)$&$\Delta^2( u- \tilde w)(s)$\\ \hline
$s=0$&=0&>0&>0\\ \hline
$s \in (0, R_1)$&>0&>0&>0\\ \hline
$s=R_1$&>0&>0&=0\\ \hline
$s \in (R_1, R_2)$&>0&>0&<0\\ \hline
$s=R_2$&>0&=0&<0\\ \hline
$s \in (R_2, R_3)$&>0&<0&<0\\ \hline
$s=R_3$&=0&<0&<0\\ \hline
$s \in (R_3, R_4)$&<0&<0&<0\\ \hline
$\vdots$ &$\vdots$ &$\vdots$ &$\vdots$
\end{tabular}
\end{adjustbox}
\end{table}

\noindent If it is infinite, then we take the limit $R_*=\lim_{i \to \infty} R_i \le  \min \{ 1, 1/\lambda \}$ and by continuity and differentiability, it holds
\[ (u- \tilde w )(R_*)=0, \, \Delta( u- \tilde w)(R_*)=0, \, \Delta^2(u- \tilde w)(R_*)=0 \]
and
\[ (u'- \tilde w' )(R_*)=0, \, (\Delta( u- \tilde w))'(R_*)=0, \, (\Delta^2(u- \tilde w))'(R_*)=0. \]
Now, one defines
 \[ U(r)=(u(r), -\Delta u(r), \Delta^2 u(r)) \quad 0 \le r \le 1 \]
 and
\[  W(r)=(\tilde{w}(r), -\Delta \tilde{w} (r), \Delta^2 \tilde w(r)) \quad 0 \le r \le 1/\lambda. \]
 Hence,  for any $0 \le r \le R_*$ one has
 \begin{multline}\label{UW_1} U(r)- W(r)=\\
 \int_{r}^{R_*} \frac{s}{N-2} \left( 1 - \left(\frac{s}{r} \right)^{N-2} \right)  (F(U(s))- F(W(s))) \, ds \end{multline}
where we set $F(x,y, z)=(y, z, x^p)$.
Since $p>1$, then $F$ is locally Lipschitz continuous, hence by the Gronwall Lemma, \eqref{UW_1} implies $U=W$ on $[0, R_*]$.
This is in contradiction with the assumption $\Delta^2( u- \tilde w)(0)>0$.

\noindent On the other hand, if the sequence stops at a maximum value $R_k$ then on $(R_{k-1}, R_k=\min \{1, 1/\lambda \} ]$ one of the following is verified, see \autoref{table3}:
\begin{itemize}
\item $u- \tilde w$ and $\Delta(u- \tilde w)$ have the same sign
\item $u- \tilde w$ and $\Delta^2(u- \tilde w)$ have opposite sign.
\end{itemize}

\noindent Let for instance $u- \tilde w >0$ and $\Delta(u- \tilde{w}) \ge 0 $. Then,
\[ 0 < (u- \tilde w)(\min \{1, 1/\lambda \})= \begin{cases}
u(1/\lambda) & \text{if $\lambda >1$}\\
0 & \text{if $\lambda=1$}\\
- \tilde w (1) & \text{if $\lambda <1$}
\end{cases} \]
which implies $\lambda >1$, whereas by Hopf lemma $$0< (u'- \tilde w' )(\min \{1, 1/\lambda \})=(u' - \tilde w')(1/\lambda)=u'(1/\lambda)<0$$ thus a contradiction.

\noindent Let now $u- \tilde w \ge 0$, $\Delta( u- \tilde w) < 0$ and $\Delta^2(u- \tilde w) <0$. Hence $(u - \tilde w)(\min \{1, 1/\lambda \}) \ge 0$, and therefore $\lambda \ge 1$. Moreover, $\Delta u(1/\lambda)=\Delta( u- \tilde w)(1/\lambda) < 0$, whereas by Hopf Lemma and \autoref{lem:shape} $(\Delta u)'(1/\lambda) \le (\Delta( u- \tilde w))'(1/\lambda) < 0$.

\noindent By \autoref{lem:shape}, in particular we have that $\Delta u$ increases until reaches a point $r_0$ and then decreases. Since $\Delta u(1)=0$, $\Delta u$ attains its maximum in $r_0$ and $(\Delta u)'<0$, $\Delta u >0$ on $(r_0, 1)$, whereas $(\Delta u)' >0$ on $(0,r_0)$.
Therefore, we cannot find a point such that $(\Delta u)'<0$ and $\Delta u  <0$, hence we reach again a contradiction.

\noindent Since we get to a contradiction in all possible cases, we can not have $\Delta^2( u- \tilde w)(0)>0$ and $\Delta( u- \tilde w)(0) >0$. In a similar fashion, one proves that also the other possible choices for the sign of $\Delta^2( u- \tilde w)(0)$ and $\Delta( u- \tilde w)(0)$ yield a contradiction, hence the claim \eqref{claim} holds.

\noindent Now, in view of \eqref{lambda} and \eqref{claim}, and since by \eqref{ce} and \eqref{ce2}
\begin{multline} u'(0)=\tilde w'(0)= (\Delta^2 u)'(0)= (\Delta^2 \tilde w)'(0)\\
=\tilde z'(0)=(\Delta u)'(0)=(\Delta \tilde w)'(0)=0,  \end{multline}
for any $r \le \min \{ 1, 1/\lambda \}$ one has
 \begin{multline}\label{UW} U(r)- W(r)=\\
 \int_0^r \frac{s}{N-2} \left( 1 - \left(\frac{s}{r} \right)^{N-2} \right)  (F(W(s))- F(U(s))) \, ds \end{multline}
 where $F(x,y, z)=(y, z, x^p)$.
Since $p>1$, then $F$ is locally Lipschitz continuous, hence by the Gronwall Lemma, \eqref{UW} implies $U=W$ on $[0, \min \{1, 1/\lambda\}]$.

\noindent Finally, $0<u(1/\lambda)=\tilde w(1/\lambda)=0$ if $\lambda >1 $, whereas $0=u(1)=\tilde w(1)>0$ if $\lambda <1 $, thus $\lambda=1$ and $u=w$. \qed

%--------------------------------------------------------------------------------------------------------------------------
\subsection{Proof of \autoref{teo:eqnDir} in the case $\alpha=4$}%-------------------------------------------
%--------------------------------------------------------------------------------------------------------------------------

Let $u, w$ be two nontrivial solutions to
\begin{equation}\label{eqn4}
\begin{cases}
\Delta^4 u=\abs{u}^p, & \text{ in } B_1\\
u=\frac{\partial u}{\partial \nu}=\frac{\partial^2 u}{\partial \nu^2}=\frac{\partial^3 u}{\partial \nu^3}=0, &  \text{ on } \partial B_1.
\end{cases}
\end{equation}
Choose $\lambda, s$ such that $\tilde w(r)=\lambda^s w(\lambda r)$ satisfies \eqref{eqn4} on $B_{1/\lambda}$ and $u(0)=\tilde w(0)$. We want to prove that
\begin{equation}\label{claim4}
\Delta^ku(0)=\Delta^k \tilde w (0), \quad k=0, \dots, 3.
\end{equation}
For instance, assume that
\[ \Delta(u- \tilde w)(0)>0, \quad \Delta^2(u- \tilde w)(0)<0, \quad \Delta^3(u- \tilde w)(0)>0.  \]
Considerations below hold with some modifications also for other choices of the above signs.
Let us define 
\begin{multline*} R_1= \sup \{ r \le \min \{1, 1/\lambda \} : \, (u- \tilde w)(s) >0, \, \Delta( u- \tilde w)(s) >0, \\
 \, \Delta^2(u-\tilde w)(s) < 0, \, \Delta^3(u-\tilde w)(s) >0,  s \in (0, r) \} .\end{multline*}
By the maximum principle, $(u- \tilde w)(R_1)>0$ and $\Delta^3(u- \tilde w)(R_1)>0$, whereas $\Delta(u-\tilde w)(R_1)$ and $\Delta^2(u- \tilde w)(R_1)$ may be $=0$.
If for instance $\Delta(u-\tilde w)(R_1)=0$, then by considering
\begin{multline*} R_2= \sup \{ r \le \min \{1, 1/\lambda \} : \,  (u-\tilde w)(s)>0, \, \Delta(u-\tilde w)(s) < 0, \\
\Delta^2(u- \tilde w)(s) <0, \, \Delta^3(u- \tilde w)(s) >0, \, s \in (R_1, r) \} \end{multline*}
we have that $\Delta(u-\tilde w)(R_2)<0$ and $\Delta^3(u- \tilde w)(R_2)>0$, whereas $(u-\tilde w)(R_2)$ and $\Delta^2(u- \tilde w)(R_2)$ may be $=0$. We now iterate to get a sequence $\{ R_j \}$ (finite or infinite) such that for any $j$ one or two among $(u- \tilde w)(R_j), \Delta(u-\tilde w)(R_j), \Delta^2(u- \tilde w)(R_j), \Delta^3(u- \tilde w)(R_j)$ is $=0$.

\noindent If $\{ R_j \}$ is infinite, then we reach a contradiction as in \autoref{sec:3} by applying the Gronwall Lemma with $F(x, y, z, w)=(y, z, w, x^p)$. Let us assume that $\{ R_j \}$ is finite.
We want to exclude the possibility that on $(R_j, R_{j+1})$ for some $j$ we have
\[ (u- \tilde w)<0, \, \Delta(u-\tilde w)>0, \,  \Delta^2(u-\tilde w)<0, \,  \Delta^3(u-\tilde w)>0 \]
(or opposite signs).
In order for this to happen, since in $(0, R_1)$
\[ (u- \tilde w)>0, \, \Delta(u-\tilde w)>0, \,  \Delta^2(u-\tilde w)<0, \,  \Delta^3(u-\tilde w)>0 \]
we need that  $(u-\tilde w)(R_k)=0$ for an odd number of $k \le j$, $\Delta(u-\tilde w)(R_k)=0$ for an even number of $k \le j$, $ \Delta^2(u-\tilde w)(R_k)=0$ for an even number of $k \le j$, and $ \Delta^3(u-\tilde w)(R_k)=0$ for an even number of $k \le j$.
However, let us assume that the number of $k\le j$ such that $(u-\tilde w)(R_k)=0$ is $n$. Then, the number of zeros of $\Delta(u-\tilde w)$ must be $\ge n$, since $u-\tilde w$ can be $0$ only if $\Delta(u-\tilde w)$ has been $=0$ before.
There are three possible cases:
\begin{enumerate}
\item The number of zeros of $\Delta(u-\tilde w)$ is $n$;
\item The number of zeros of $\Delta(u-\tilde w)$ is $n+1$ (if we stop after a zero of $\Delta(u-\tilde w)$ and before $(u-\tilde w)$ vanishes again);
\item The number of zeros of $\Delta(u-\tilde w)$ is equal to $n+2$. This last case happens when $\Delta(u-\tilde w)=0$ for two consecutive times, without having $(u-\tilde w)=0$ in the between.
Notice that such a situation may happen just once, since at the last step the four columns turn out to have the same sign and hence cannot be $0$ again, see a model case in \autoref{table4}.
\end{enumerate}
\begin{table}\centering\caption{Sign of $u- \tilde w$, $\Delta(u- \tilde w)$, $\Delta^2(u- \tilde w)$, $\Delta^3(u- \tilde w)$ in a special case. }
\label{table4}\begin{adjustbox}{max width=\textwidth}
\begin{tabular}{r|ccccc}
&$(u-\tilde w)(s)$&$\Delta( u- \tilde w)(s)$&$ \Delta^2(u-\tilde w)(s)$&$ \Delta^3(u-\tilde w)(s)$\\ \hline
$\vdots$ &$\vdots$ &$\vdots$ &$\vdots$ &$\vdots$\\ \hline
$s \in (R_{j}, R_{j+1})$&<0&<0&>0&>0\\ \hline
$s=R_{j+1}$&<0&=0&>0&>0\\ \hline
$s \in (R_{j+1}, R_{j+2})$&<0&>0&>0&>0\\ \hline
$s=R_{j+2}$&<0&>0&>0&=0\\ \hline
$s \in (R_{j+2}, R_{j+3})$&<0&>0&>0&<0\\ \hline
$s=R_{j+3}$&<0&>0&=0&<0\\ \hline
$s \in (R_{j+3}, R_{j+4})$&<0&>0&<0&<0\\ \hline
$s=R_{j+4}$&<0&=0&<0&<0\\ \hline
$s \in (R_{j+4}, \min \{1, 1/\lambda \})$&<0&<0&<0&<0\\ \hline
\end{tabular}
\end{adjustbox}
\end{table}

\noindent Assume $n$ odd. In order to have an even number of zeros of $\Delta(u-\tilde w)$ we have to consider the second case, namely the number of zeros of $\Delta(u-\tilde w)$ must be $n+1$.
Now, $\Delta(u-\tilde w)$ might be zero in $R_1$ even if $ \Delta^2(u-\tilde w)$ has not vanished yet. Hence the number of zeros of $ \Delta^2(u-\tilde w)$ can be $n$, $n+1$ or $n+2$. Recall we need that $\Delta^2(u- \tilde w)$ has an even number of zeros, and that $n$ is odd, hence we conclude that $\Delta^2(u- \tilde w)$ has $n+1$ zeros. We deduce as above that $ \Delta^3(u-\tilde w)$ can have $n$, $n+1$ or $n+2$ zeros, and in turn $n+1$ since their number has to be even. However, this implies that $u-\tilde w$ should have at least $n+1$ zeros. This is a contradiction, since the number of zeros of  $u-\tilde w$ is $n$ by assumption.

\noindent As a consequence, we conclude that  the following configuration is not possible 
\[ (u- \tilde w)<0, \, \Delta(u-\tilde w)>0, \,  \Delta^2(u-\tilde w)<0, \,  \Delta^3(u-\tilde w)>0, \]
and the same holds true having opposite signs.
Therefore, one of the following (or reversed) is verified on $(R_k, \min \{1, 1/\lambda \})$:
\begin{itemize}
\item $(u- \tilde w)>0$, $\Delta(u-\tilde w)>0$;
\item $(u- \tilde w)>0$, $\Delta(u-\tilde w)<0$, $\Delta^2(u-\tilde w)<0$;
\item $(u- \tilde w)>0$, $\Delta(u-\tilde w)<0$, $\Delta^2(u-\tilde w)>0$, $\Delta^3(u-\tilde w)>0$.
\end{itemize}

\noindent By \autoref{lem:shape}, $\Delta^3 u$ is increasing. Moreover, $\Delta^3 u(0) <0$, and $\Delta^2 u$ is first positive and decreasing, then negative, reaches its minimum in this interval and then increases to a positive value $\Delta^2 u(1)$. As a consequence, $\Delta u(0)<0$, then increases, reaches a positive maximum value and then decreases to 0.

\noindent Assume that in the last interval the following holds 
\[ (u- \tilde w)>0, \, \Delta(u-\tilde w)>0.  \]
If both the first and the second column have $n$ zeros, then we apply the Hopf lemma and we obtain $0>u'(1/\lambda)=(u'-\tilde w')(1/\lambda)>0$, a contradiction. Otherwise, it means that the second column has $n+2$ zeros, which in turn gives that the third column has $n+1$ zeros, and the last one has $n$ zeros, thus $ \Delta^2(u-\tilde w)>0$ and $\Delta^3(u-\tilde w)>0$, see \autoref{table4}. Then, by applying Hopf lemma,
\[ 0<(\Delta^2 (u- \tilde w))'(1/\lambda)\le (\Delta^2 u)'(1/\lambda) \]
as $(\Delta^2 \tilde w)'(1/\lambda)\ge 0$, and $0<\Delta^2 (u-\tilde w)(1/\lambda)< \Delta^2 u(1/\lambda)$. Moreover, $\Delta u(1/\lambda)=\Delta(u- \tilde w)(1/\lambda)>0$ and $(\Delta u)'(1/\lambda)=(\Delta (u-\tilde w))'(1/\lambda)>0$. However, by \autoref{lem:shape}, there does not exist a point such that $\Delta^2 u>0$, $(\Delta^2 u)'>0$, $\Delta u>0$ and $(\Delta u)'>0$.

\noindent Assume that
\[ (u- \tilde w)>0, \, \Delta(u-\tilde w)<0, \, \Delta^2(u-\tilde w)<0.  \]
Then $\lambda >1$, $0>\Delta(u-\tilde w)(1/\lambda)=\Delta u(1/\lambda)$. If $\Delta^2(u-\tilde w)$ does not change sign after the last zero of $\Delta(u-\tilde w)$, then we can apply Hopf to get $(\Delta u)'(1/\lambda)=(\Delta(u-\tilde w))'(1/\lambda)<0$.
However, it cannot exists a point such that $\Delta u(1/\lambda)<0$ and $(\Delta u)'(1/\lambda)<0$ by \autoref{lem:shape}.
If we cannot apply Hopf, then it means that the third column has $n+2$ zeros, which is not possible.

\noindent Assume finally that
\[ (u- \tilde w)>0, \, \Delta(u-\tilde w)<0, \, \Delta^2(u-\tilde w)>0, \, \Delta^3(u-\tilde w)>0, \]
Again $\lambda >1$ and $0>\Delta u(1/\lambda)$. Moreover, $0<\Delta^2 (u-\tilde w)(1/\lambda)< \Delta^2 u(1/\lambda)$ and by Hopf
\[ 0<(\Delta^2 (u- \tilde w))'(1/\lambda)\le (\Delta^2 u)'(1/\lambda) \]
as $(\Delta^2 \tilde w)'(1/\lambda) \ge 0$ by \autoref{lem:shape}.
However, such a point cannot exists, hence we have a contradiction.
As in Section \ref{sec:3}, we conclude that \eqref{claim4} holds, then $u=\tilde w$, which in turn gives $u=w$. \qed

\medskip

\subsection*{Open problem} Consider $\alpha \ge 5$, and take two different solutions $u, w$. One can naturally parametrize $w$ as $\tilde w(r)=\lambda^s w(\lambda r)$, where $s=\frac{2\alpha}{p-1}$, and $\lambda$ is such that $\tilde w(0)=u(0)$. Again, it is easy to prove that the uniqueness result follows once we prove that $\Delta^k(u- \tilde w)(0)=0$. One builds a table as above, and gets a sequence $\{ R_j \}$. If it is infinite, then one extends considerations above choosing a suitable $F$ to apply Gronwall. The main difficulty turns out to be the proof of the contradiction in the finite case, equivalently, the extension of the following lemma to $\alpha \ge 5$.
\begin{lemma}
Let $2 \le \alpha \le 4$. 
Then the following configuration:  
\[ (-\Delta)^k (u-\tilde w)<0, \, k=0, \dots, \bar{k} \]
and
\[ (-\Delta)^{\bar{k}+1} (u-\tilde w)>0, \]
for some $\bar{k}$, cannot occur at the last step. 
\end{lemma}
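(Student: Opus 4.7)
The plan is to follow the two--step template used in the proofs of \autoref{teo:eqnDir} for $\alpha=3$ and $\alpha=4$ (Sections 2.1 and 2.2): first locate the endpoint $\min\{1,1/\lambda\}$ by inspecting the sign of $u-\tilde w$ there, then combine a Hopf boundary argument with \autoref{lem:shape} to rule out each admissible value of $\bar{k}\in\{0,\dots,\alpha-1\}$.

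For the location step, on the last subinterval $J=(R_{k-1},\min\{1,1/\lambda\}]$ the hypothesis $(-\Delta)^0(u-\tilde w)=u-\tilde w<0$ forces $u-\tilde w<0$ at the right endpoint by continuity. The case $\lambda>1$ is excluded because at $r=1/\lambda$ one would have $u(1/\lambda)>0=\tilde w(1/\lambda)$, whereas the case $\lambda=1$ gives $u^{(j)}(1)=\tilde w^{(j)}(1)=0$ for every $j=0,\dots,\alpha-1$, and a Volterra--Gronwall argument of the type used around \eqref{UW} then forces $u\equiv\tilde w$, contradicting nontriviality. Hence $\lambda<1$ and the right endpoint equals $r=1$, where $u(1)=0$ while $\tilde w(1)>0$.

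The Hopf step exploits that $-\Delta[(-\Delta)^{\bar k}(u-\tilde w)]=(-\Delta)^{\bar k+1}(u-\tilde w)>0$ makes $v:=(-\Delta)^{\bar k}(u-\tilde w)$ strictly superharmonic on $J$. The strong maximum principle and Hopf's boundary point lemma therefore produce a strict one--sided sign on either $v(1)$ or $v'(1)$; stepping down through each intermediate layer $(-\Delta)^j$ for $j=\bar k,\bar k-1,\dots,0$ via the radial antilaplacian identity \eqref{radialLaplace} propagates this information into forced signs for a pair $(\Delta^s u(1),(\Delta^s u)'(1))$ with some $s\le \alpha-1$. Comparing with \autoref{lem:shape}, which pins down the monotonicity branch of $\Delta^s u$ near $r=1$, one reaches the desired contradiction; for $\alpha\in\{2,3,4\}$ the admissible values of $\bar k$ are few enough, and the oscillation parities rigidly dictated by \autoref{table3} and \autoref{table4}, that all cases can be enumerated exactly as at the end of the two subsections.

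The main obstacle is the bookkeeping inside the Hopf cascade: for each layer $j\le\bar k$ one must decide whether $\Delta^j(u-\tilde w)(1)$ vanishes (so that Hopf yields a strict derivative sign there) or is nonzero with a sign read off directly, and this dichotomy depends on the parity of zeros of $\Delta^j(u-\tilde w)$ accumulated across the preceding $R_i$'s. For $\alpha\le 4$ this parity is fully controlled by \autoref{lem:shape}, which is why the enumeration closes; for $\alpha\ge 5$ one layer escapes its control, and this is precisely the open problem flagged just above.
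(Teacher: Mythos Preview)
Your proposal follows the same two--step template as the paper (locate $\lambda$, then combine Hopf with \autoref{lem:shape}), but the execution contains a genuine mix--up that would make the argument fail as written.

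\textbf{The $\lambda=1$ step.} You dispose of $\lambda=1$ by a Gronwall argument from $r=1$, but that needs all $2\alpha$ Cauchy data of $u$ and $\tilde w$ to match there. The Dirichlet conditions give only $u^{(j)}(1)=\tilde w^{(j)}(1)=0$ for $j\le\alpha-1$; the remaining data $(\Delta^s u)'(1)$, $\Delta^{s}u(1)$ for higher $s$ are \emph{not} forced to coincide (indeed $u\ne w$), so the integral identity around \eqref{UW} does not apply. In the paper the case $\lambda=1$ is absorbed into the Hopf step itself: for instance in the configuration $u-\tilde w<0$, $\Delta(u-\tilde w)<0$ one gets $(u-\tilde w)'(1)<0$ by Hopf, while the boundary conditions give $(u-\tilde w)'(1)=0$.

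\textbf{The target of the \autoref{lem:shape} comparison.} This is the more serious issue. Having established $\lambda<1$, the endpoint is $r=1$, where the \emph{Dirichlet data of $u$} vanish and the values $\Delta^s u(1)$, $(\Delta^s u)'(1)$ are already completely pinned down by \autoref{lem:shape}; there is nothing there for the Hopf cascade to ``force'' into contradiction. What the cascade actually produces are signs for $\Delta^s(u-\tilde w)(1)$ and its derivative. Since at $r=1$ it is $\tilde w$ that sits at an \emph{interior} point (namely $w$ evaluated at $\lambda\in(0,1)$), the information you extract is about $\Delta^s w(\lambda)$ and $(\Delta^s w)'(\lambda)$, and the contradiction must come from comparing these with \autoref{lem:shape} applied to $w$. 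Equivalently---and this is how the paper proceeds---reverse all signs at the outset so that $u-\tilde w>0$, conclude $\lambda\ge 1$, and work at the interior point $1/\lambda$ where now $\tilde w$'s boundary data vanish; then the forced signs are genuinely signs of $\Delta^s u(1/\lambda)$, $(\Delta^s u)'(1/\lambda)$, and \autoref{lem:shape} for $u$ gives the contradiction (e.g.\ for $\alpha=3$, $\bar k=1$ one obtains $\Delta u(1/\lambda)<0$ together with $(\Delta u)'(1/\lambda)<0$, which the shape of $\Delta u$ forbids). As written, you have applied \autoref{lem:shape} to the wrong function at the wrong point.
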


As a consequence we have 
\begin{lemma}\label{lem:zeros}
Let $2 \le \alpha \le 4$. Assume that $u-\tilde w$ has $n$ zeros and that $u(0)=\tilde w(0)$ and
\[ (-\Delta)^k (u-\tilde w) (0)<0, \, k=1, \dots, \alpha-1 \]
holds. Then $\Delta^{\alpha-1}(u-\tilde w)$ must have at least $n+1$ zeros.
\end{lemma}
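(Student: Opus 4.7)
The plan is to combine an iterated radial Rolle count with a parity analysis made rigid by the preceding lemma on forbidden terminal configurations. Write $g = u - \tilde w$ and $R = \min\{1, 1/\lambda\}$, and build, as in Sections 2.1 and 2.2, the $R_j$-table that records the signs of $g, \Delta g, \ldots, \Delta^{\alpha-1} g$ on the subintervals determined by the points where at least one of these quantities vanishes.

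The first ingredient is a radial Rolle transfer. Using $(\Delta^k g)'(0) = 0$ for every $k$ and the identity
\[
r^{N-1}(\Delta^k g)'(r) = \int_0^r s^{N-1}(\Delta^{k+1} g)(s)\, ds,
\]
one sees that if $\Delta^k g$ has $m$ zeros in $(0, R)$ then the auxiliary function $s \mapsto s^{N-1}(\Delta^k g)'(s)$ vanishes at $0$ and at least once strictly between consecutive zeros of $\Delta^k g$, giving at least $m$ zeros in $[0, R)$; a second application of Rolle produces at least $m - 1$ zeros of $\Delta^{k+1} g$. In the base step $k = 0$ the double vanishing $g(0) = g'(0) = 0$ yields one bonus zero, so $\Delta g$ has at least $n$ zeros in $(0, R)$.

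The second ingredient is a parity upgrade. For each $k \in \{1, \ldots, \alpha-1\}$, the hypothesis $(-\Delta)^k g(0) < 0$ pins down the sign of $\Delta^k g$ at the origin, while the sign of $\Delta^k g$ on the terminal interval $(R_\ell, R)$ is constrained by the preceding lemma, which precisely forbids the alternating pattern $(-\Delta)^j g < 0$ for $j \le \bar k$ with $(-\Delta)^{\bar k +1} g > 0$. Since (generically) simple zeros are sign changes, these two endpoint signs determine the parity of the number of zeros of $\Delta^k g$ in $(0, R)$. Combined with the Rolle lower bound, this selects a unique value out of the three possibilities $\{m, m+1, m+2\}$ allowed by the transfer estimate — the two extra options reflecting that $\Delta^{k-1} g$ and $\Delta^k g$ may share a zero at some $R_j$. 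Inductively, starting from $n$ zeros for $g$ one obtains $n + 1$ zeros for $\Delta g$ and, step by step, $n + 1$ zeros for $\Delta^k g$ for every $k = 2, \ldots, \alpha - 1$; in particular $\Delta^{\alpha-1} g$ has at least $n + 1$ zeros.

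The main obstacle is the second ingredient: translating the hypothesis at the origin together with the preceding forbidden-configuration lemma into a clean parity chain that survives all $\alpha - 1$ iterations. This requires the case-by-case sign-tracking already carried out, for small $\alpha$, in the tables of Sections 2.1 and 2.2, and it is exactly the point where the restriction $2 \le \alpha \le 4$ enters: for $\alpha \ge 5$ the admissible terminal sign patterns of $(g, \Delta g, \ldots, \Delta^{\alpha-1} g)$ become too numerous to rule out the configurations that would let the zero count slip back from $n + 1$ to $n$ at some intermediate level, which is precisely the open problem flagged just before the statement.
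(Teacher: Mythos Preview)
Your Rolle transfer is pointed in the wrong direction for this conclusion. The chain $n_{k+1} \ge n_k - 1$ (with the bonus $n_1 \ge n$ at $k=0$) yields only $n_{\alpha-1} \ge n - (\alpha-2)$, which for $\alpha=4$ is $n-2$, three below the target $n+1$. A parity constraint, even if it were available at every step, can lift a lower bound by at most one, so no iteration of ``Rolle $+$ parity'' closes that gap. Your assertion that ``starting from $n$ zeros for $g$ one obtains $n+1$ zeros for $\Delta g$'' already fails: Rolle gives $n_1 \ge n$, and Lemma~4 does not pin down the terminal sign of $\Delta g$ uniquely (it only forbids certain configurations), so there is no parity information forcing $n_1 \ge n+1$. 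The reference to ``three possibilities $\{m, m+1, m+2\}$'' is not a consequence of the Rolle estimate at all---Rolle gives only a lower bound---so you are implicitly importing the table/maximum-principle constraints without saying so, and then not actually carrying out the case analysis.

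The paper's argument runs the other way and is a one-step contrapositive rather than an inductive chain. In the $R_j$-table, the maximum principle forces the constraint that column $k$ can change sign only after column $k+1$ has; this is the opposite inequality to Rolle and is what makes zero counts propagate \emph{upward} along $k$. The proof is then: if $\Delta^{\alpha-1}(u-\tilde w)$ had at most $n$ zeros, the terminal configuration would necessarily contain two consecutive columns of the same sign, which is precisely the pattern ruled out by Lemma~4 (whose content is the detailed case analysis of Sections~2.1--2.2). There is no Rolle step and no level-by-level parity bookkeeping; the restriction $\alpha\le 4$ enters only through Lemma~4.
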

Indeed, if not, then at least two consecutive columns have the same sign, and we get a contradiction. 

\begin{remark}\label{rmk:zeros}
One can prove in the same way as \autoref{lem:zeros} that, if $\alpha \le 4$ and
\[ (-\Delta)^k (u-\tilde w)(0) <0, \, k=0, \dots, \alpha-1 \]
holds, and $u-\tilde w$ has $n$ zeros, then $\Delta^{\alpha-1}(u-\tilde w)$ must have at least $n$ zeros. This will be useful in the next Section.
\end{remark}

%--------------------------------------------------------------------------------------------------------------------------
\section{Proof of Theorem \ref{teo:sysDir}}%--------------------------------------------------------------------------------------
%--------------------------------------------------------------------------------------------------------------------------

%--------------------------------------------------------------------------------------------------------------------------
\subsection{Existence}%--------------------------------------------------------------------------------------------
%--------------------------------------------------------------------------------------------------------------------------

Next we extend to system \eqref{sys:Var} the existence results obtained in \cite{Schiera18} in the case of systems of two equations, see also \cite{AziziehClementMitidieri02} for $p$-Laplacian systems. In what follows we recall the main steps in the proof, and the necessary changes required to treat the case in which one has $m >2$ equations.
\medskip

\noindent \textit{Step 1}. An auxiliary system. If $\prod_{j=1}^m p_j>1$, and if the only classical solution to \eqref{sysDir} is the trivial one, then there exists an unbounded sequence of solutions $(t_n, u_{1,n}, \dots, u_{m,n})$ to the following
\[
\begin{cases}
\begin{aligned}
&(-\Delta)^{\alpha_j} u_{j,n}=(t_n^{\theta_j}+\abs{u_{j+1,n}})^{p_j},\, j =1, \dots, m-1  \\
&(-\Delta)^{\alpha_m} u_{m,n}=(t_n^{\theta_m}+\abs{u_{1,n}})^{p_m}  
\end{aligned} & \text{ in } B_1 \subset \mathbb{R}^N, \\
\frac{\partial^k u_{j,n}}{\partial \nu^k}=0, \, k=0, \dots, \alpha_j-1, \, j =1, \dots, m  &  \text{ on } \partial B_1,
\end{cases}
\]
where $N > 2 \max \{ \alpha_j \}_j $ and $\theta_j$ are such that 
\begin{equation}\label{theta} \theta_j p_j > \theta_{j-1}, \, \forall j. \end{equation}
For instance, one can call 
\[ a_j=1+j (\prod_{k=1}^m p_k - 1) \]
and choose 
\[ \theta_j=\frac{a_{j-1}}{\prod_{k=2}^j p_k} \]
for $j=2, \dots, m$, and $\theta_1=1$. 
The proof of this step relies on a fixed point lemma due to Azizieh and Cl\'ement \cite[Lemma A.2]{AziziehClement02}, see \cite[Proposition 1]{Schiera18} for the case $m=2$.

\medskip

\noindent \textit{Step 2}. Blow up analysis. We can assume without loss of generality that 
\[ \frac{t_n^{\theta_j}}{\norm{u_{j,n}}_{\infty}} \to 0, \, j=1,\dots, m, \] 
as follows by choosing 
\[ \tilde u_{j,n}=\frac{u_{j,n}}{t_n^{\theta_{j-1}}}, \, \, \lambda_{j,n}=t_n^{\theta_jp_j - \theta_{j-1}} \]
and applying the comparison principle. Here we exploit \eqref{theta} to have $\lambda_n \to \infty$.
Moreover, assume that the maximum of $u_{k,n}$ is attained in $0$ for any $k$. 
We define 
\[ \hat u_{j,n} (y)=\frac{u_{j,n} (C_n^{-1} y)}{A_{j,n}}, \]
where 
\[ A_{j,n}=C_n^{\sigma_j}, \]
\[ C_n=\sum_{j} \norm{u_{j,n}}_{\infty}^{1/\sigma_j} \]
and moreover 
\[ \sigma_1=\frac{2\sum_{k=1}^{m} \alpha_k \prod_{j=1}^{k-1} p_j}{\prod_{j=1}^m p_j -1}, \, \sigma_j=-2\alpha_j+p_j\sigma_{j+1}. \]
This by a limit procedure gives a nontrivial solution to 
\begin{equation}\label{sysRN}
 \begin{cases}
(-\Delta)^{\alpha_j} u_{j,n}=\abs{u_{j+1,n}}^{p_j},\, j =1, \dots, m-1 \, &\text{ on } \mathbb{R}^N \\
(-\Delta)^{\alpha_m} u_{m,n}=\abs{u_{1,n}}^{p_m}  \, &\text{ on } \mathbb{R}^N
\end{cases} \end{equation}
due to our choice of the parameters $A_{j,n}$ and $\sigma_j$. 
This limit solution is nontrivial since 
\[ \sum_{i \ne k} \norm{u_{i,n}}_{\infty}^{1/\sigma_i} \le \norm{u_{k,n}}_{\infty}^{1/\sigma_k}(m-1) \]
for at least one value $k$. Indeed, if not, then upon summation
\[ (m-1)\sum \norm{u_{i,n}}_{\infty}^{1/\sigma_i} > (m-1)\sum \norm{u_{i,n}}_{\infty}^{1/\sigma_i}, \]
a contradiction. 
Assume for instance that $k=1$ and call 
\[ b_n=\frac{\sum_{i \ne 1} \norm{u_{i,n}}_{\infty}^{1/\sigma_i}}{\norm{u_{1,n}}_{\infty}^{1/\sigma_1}} \le m-1. \]
Then, 
\[ (\hat u_{1,n})^{1/\sigma_1}(0)=\frac{\norm{u_{1,n}}_{\infty}^{1/\sigma_1}}{\sum\norm{u_{i,n}}^{1/\sigma_i}}= \frac{1}{1+b_n} \ge \frac{1}{m}, \]
and in particular the limit is nontrivial. 

\medskip

\noindent \textit{Step 3}. We prove that the maximum of $u_{k,n}$ is attained in $0$ for any $k$, as the following Lemma shows.
\begin{lemma}\label{lemma2}
Let $(u_1, \dots, u_m)$ be a nontrivial solution to 
\[
\begin{cases}
\begin{aligned}
&(-\Delta)^{\alpha_j} u_{j,n}=f_j(u_{j+1}),\, j =1, \dots, m-1  \\
&(-\Delta)^{\alpha_m} u_{m,n}=f_m(u_1) 
\end{aligned} & \text{ in } B_1 \subset \mathbb{R}^N,  \\
\frac{\partial^k u_{j,n}}{\partial \nu^k}=0, \, k=0, \dots, \alpha_j-1, \, j =1, \dots, m  &  \text{ on }\partial B_1,
\end{cases}
\]
where $N > 2 \max \{ \alpha_j \}_j$ and $f_j : [0, \infty) \to \mathbb{R}$ are continuous, positive and non decreasing. 
Then $u_1, \dots, u_m$ are radially symmetric and strictly decreasing in the radial variable. 
\end{lemma}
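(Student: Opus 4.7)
The plan is to prove this by the method of moving planes applied to the integral reformulation of the system, adapting the polyharmonic version of Gidas--Ni--Nirenberg used in \autoref{lem:rad} to the cyclic coupling. Since the nonlinearities are only assumed continuous and non-decreasing (no $C^1$ or Lipschitz structure), the integral formulation rather than a differential moving planes argument is essential.

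First I would pass to the integral form. Let $G_{\alpha_j}(x,y)$ denote the polyharmonic Green function of $(-\Delta)^{\alpha_j}$ on $B_1$ under Dirichlet boundary conditions; by Boggio's classical formula, see \cite{GazzolaGrunauSweers10}, $G_{\alpha_j}>0$ on $B_1\times B_1$, it is symmetric, invariant under the simultaneous reflection $(x,y)\mapsto(x^\lambda,y^\lambda)$ across any hyperplane $T_\lambda=\{x_1=\lambda\}$, and satisfies the strict inequality $G_{\alpha_j}(x^\lambda,y)>G_{\alpha_j}(x,y)$ for $x,y\in\Sigma_\lambda:=\{z\in B_1:z_1>\lambda\}$. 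The system rewrites as
\[ u_j(x)=\int_{B_1}G_{\alpha_j}(x,y)\,f_j(u_{j+1}(y))\,dy,\qquad j=1,\dots,m, \]
with cyclic convention $u_{m+1}=u_1$, and positivity of each $f_j$ immediately yields $u_j>0$ on $B_1$.

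Next I would run moving planes along the direction $e_1$. Setting $w_j^\lambda(x):=u_j(x^\lambda)-u_j(x)$, splitting the representation integral over $\Sigma_\lambda$ and its reflection and exploiting the symmetry of $G_{\alpha_j}$ produces, for $x\in\Sigma_\lambda$,
\[ w_j^\lambda(x)=\int_{\Sigma_\lambda}\bigl[G_{\alpha_j}(x^\lambda,y)-G_{\alpha_j}(x,y)\bigr]\bigl[f_j(u_{j+1}(y^\lambda))-f_j(u_{j+1}(y))\bigr]\,dy. \]
Since the kernel difference is strictly positive on $\Sigma_\lambda\times\Sigma_\lambda$ and $f_j$ is non-decreasing, this cyclic identity propagates signs: the negative part $(w_j^\lambda)_-$ is controlled by $(w_{j+1}^\lambda)_-$ and, after $m$ iterations, by $(w_j^\lambda)_-$ itself, closing the loop. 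For $\lambda$ close to $1$ the slab $\Sigma_\lambda$ has small measure and a narrow-domain / Hardy--Littlewood--Sobolev contraction argument in the spirit of Chen--Li--Ou forces $w_j^\lambda\ge 0$ on $\Sigma_\lambda$ for every $j$. Defining
\[ \lambda_0:=\inf\bigl\{\lambda\in[0,1):w_j^\mu\ge 0\text{ on }\Sigma_\mu\text{ for all }\mu\ge\lambda\text{ and all }j\bigr\}, \]
one uses the strict positivity of the kernel difference, continuity in $\lambda$, and the cyclic coupling to rule out $\lambda_0>0$: at $\lambda_0$ either some $w_j^{\lambda_0}\equiv 0$, in which case the cyclic identity forces symmetry of every $u_j$ across $T_{\lambda_0}$ (and on the ball this propagates to $\lambda_0=0$), or the moving plane could be pushed further, contradicting the definition of $\lambda_0$. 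Applying this in every direction yields radial symmetry of each $u_j$.

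Strict radial monotonicity follows by observing that if $w_j^\lambda$ vanished somewhere inside $\Sigma_\lambda$ for some $\lambda\in(0,1)$, then the strict positivity of the kernel together with the cyclic propagation would force $w_j^\lambda\equiv 0$, making $u_j$ symmetric about $T_\lambda$ with $\lambda>0$, which is incompatible with the Dirichlet conditions and the positivity on $B_1$. Hence $w_j^\lambda>0$ strictly on $\Sigma_\lambda$ for every $\lambda\in(0,1)$ and every $j$, which combined with the radial symmetry already obtained yields strict monotonicity in the radial variable. The main obstacle I expect is closing the cycle simultaneously for all $m$ components when the $f_j$ are only continuous and non-decreasing: no linearization is available, so the sign information must be tracked purely through the cyclic integral identity, and the crucial trick is to iterate the estimate $m$ times to obtain a self-contraction on $\max_j \norm{(w_j^\lambda)_-}$. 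The second delicate point is the initial narrow-domain step, whose validity rests on the precise boundary behavior of the Boggio Green function encoded in \autoref{HopfPoly}.
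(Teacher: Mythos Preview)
Your overall strategy---moving planes applied to the Boggio Green-function representation, with the cyclic coupling handled by iterating through the $m$ components---is the right one and coincides with what the paper invokes (Proposition~3 of \cite{Schiera18}, itself modelled on the single-equation argument of Berchio--Gazzola--Weth reproduced in \cite[Theorem 7.1]{GazzolaGrunauSweers10}). Two concrete points, however, do not go through as written.

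First, the Green function of $(-\Delta)^{\alpha_j}$ on the ball is \emph{not} invariant under $(x,y)\mapsto(x^\lambda,y^\lambda)$ when $\lambda\neq 0$: that reflection does not preserve $B_1$. Consequently the clean identity
\[
w_j^\lambda(x)=\int_{\Sigma_\lambda}\bigl[G_{\alpha_j}(x^\lambda,y)-G_{\alpha_j}(x,y)\bigr]\bigl[f_j(u_{j+1}(y^\lambda))-f_j(u_{j+1}(y))\bigr]\,dy
\]
is the $\mathbb{R}^N$/half-space formula, not the one on $B_1$; on the ball there is a residual integral over $B_1\setminus(\Sigma_\lambda\cup\Sigma_\lambda^\lambda)$ and an additional term coming from $G_{\alpha_j}(x^\lambda,y^\lambda)\neq G_{\alpha_j}(x,y)$. (Your inequality $G_{\alpha_j}(x^\lambda,y)>G_{\alpha_j}(x,y)$ for $x,y\in\Sigma_\lambda$ is also false---take $x=y$.) The correct comparison inequalities for Boggio's kernel are those in \cite[Section 7.1]{GazzolaGrunauSweers10}. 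Second, the Chen--Li--Ou narrow-domain contraction you propose needs a quantitative bound of the form $|f_j(a)-f_j(b)|\le C|a-b|$, which is unavailable here; iterating $m$ times does not manufacture a contraction out of mere monotonicity. In the actual proof no contraction is used: the ``extra'' positive terms just mentioned, together with the strict positivity $f_j>0$, already force $w_j^\lambda>0$ on $\Sigma_\lambda$ for all $j$ simultaneously, both to start the process near $\lambda=1$ and to continue it. Once you replace your identity and your starting step by those ingredients, the rest of your outline (the continuation argument, the dichotomy at $\lambda_0$, and strict monotonicity) goes through.
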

\noindent The proof is analogous to that of \cite[Proposition 3]{Schiera18}.

\medskip

\noindent \textit{Step 4}. Finally, we notice that \cite[Theorem 19.1]{MitidieriPohozaev01} can be extended easily to the case of $m>2$ equations as follows. 
\begin{theorem}
Let $p_j >1$, $\alpha_j \in \mathbb{N}$, $j=1,\dots, m$, and assume that there exists $l \in \{ 1, \dots, m \}$ such that
\[
N+2 \sum_{k=1}^m \alpha_{k+l} \prod_{j=0}^{k-1} p_{j+l} - N \prod_{j=1}^m p_j \ge 0, 
\]
where we impose $p_{k+m}=p_k$ and $\alpha_{k+m}=\alpha_k$ for any $k=1, \dots, m$.  Assume further that $(u_1, \dots, u_m)$ is a weak solution to 
\eqref{sysRN}. 
Then $u_j=0$ for any $j=1,\dots, m$. 
\end{theorem}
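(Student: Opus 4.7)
The plan is to adapt the capacity test-function method of Mitidieri--Pohozaev (cf.\ \cite{MitidieriPohozaev01}, Theorem~19.1) from the case $m=2$ to arbitrary $m\ge 1$. Fix a cutoff $\varphi\in C_c^\infty(\mathbb{R}^N)$ with $\varphi\equiv 1$ on $B_1$, $\mathrm{supp}\,\varphi\subset B_2$, $0\le\varphi\le 1$, and set $\varphi_R(x)=\varphi(x/R)^K$ for a large integer $K$ chosen so that every power of $\varphi_R$ that will appear is a positive power of $\varphi$. Testing the $j$-th equation of \eqref{sysRN} against $\varphi_R$ and performing $2\alpha_j$ integrations by parts yields
\begin{equation*}
\int_{\mathbb{R}^N}|u_{j+1}|^{p_j}\varphi_R\,dx \;=\; \int_{\mathbb{R}^N} u_j\,(-\Delta)^{\alpha_j}\varphi_R\,dx \;\le\; C\,R^{-2\alpha_j}\int_{A_R}|u_j|\,\varphi_R^{1-2\alpha_j/K}\,dx,
\end{equation*}
where $A_R=B_{2R}\setminus B_R$ supports $\nabla\varphi_R$.

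Next I would apply Hölder's inequality with exponent $p_{j-1}$ (indices taken mod $m$, so $p_0:=p_m$) so that $|u_j|$ enters at the power $p_{j-1}$, which is precisely the quantity controlled by the $(j-1)$-st equation. This produces a measure factor $|A_R|^{1-1/p_{j-1}}\asymp R^{N(1-1/p_{j-1})}$ times $\bigl(\int_{A_R}|u_j|^{p_{j-1}}\varphi_R^{\ast}\,dx\bigr)^{1/p_{j-1}}$, where $\varphi_R^{\ast}$ is another admissible positive power of $\varphi_R$. Iterating this pair of steps cyclically $m$ times around the system leads to an inequality of the form
\begin{equation*}
I_l\;\le\; C\,R^{E_l}\,I_l^{1/\prod_{j=1}^m p_j}, \qquad I_l:=\int_{\mathbb{R}^N}|u_{l+1}|^{p_l}\varphi_R\,dx,
\end{equation*}
where a direct computation, tracking index shifts modulo $m$, identifies
\begin{equation*}
E_l\;=\;-\,\frac{N+2\sum_{k=1}^m\alpha_{k+l}\prod_{j=0}^{k-1}p_{j+l}-N\prod_{j=1}^m p_j}{\prod_{j=1}^m p_j}.
\end{equation*}
Hypothesis \eqref{SerrinMulti} says exactly that $E_l\le 0$ for some $l\in\{1,\dots,m\}$.

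Since $\prod p_j>1$, the exponent $1-1/\prod p_j$ is positive and we may bring $I_l^{1/\prod p_j}$ to the left to obtain $I_l^{\,1-1/\prod p_j}\le C R^{E_l}$. If $E_l<0$, sending $R\to\infty$ forces $I_l\to 0$ by monotone convergence, hence $u_{l+1}\equiv 0$, and the remaining components vanish in cascade through the equations. In the critical case $E_l=0$ one first deduces $|u_{l+1}|^{p_l}\in L^1(\mathbb{R}^N)$ from the same bound; re-running the cyclic argument with this integrability in hand, the right-hand side reduces to integrals on the \emph{shrinking} annulus $A_R$ which tend to zero as $R\to\infty$ by dominated convergence, yielding again $I_l=0$.

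The main obstacle is the algebraic bookkeeping in the cyclic iteration: verifying that after $m$ compositions of Hölder's inequality the telescoping products assemble, under the shifts $j\mapsto j+l\pmod m$, into exactly the combinatorial expression $\sum_{k=1}^m\alpha_{k+l}\prod_{j=0}^{k-1}p_{j+l}$ appearing in \eqref{SerrinMulti}. Once that identity is pinned down, the remainder is the standard Mitidieri--Pohozaev scheme, with the $L^1$-refinement for the critical case handled precisely as in \cite{MitidieriPohozaev01}.
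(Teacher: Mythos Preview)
The paper does not supply a proof of this statement: it simply records that Theorem~19.1 of \cite{MitidieriPohozaev01} ``can be extended easily to the case of $m>2$ equations'' and then states the result. Your proposal carries out precisely that extension via the rescaled cut-off (capacity) method of Mitidieri--Pohozaev, which is the intended argument; the sketch is correct, including the cyclic H\"older iteration, the identification of the exponent $E_l$ with the quantity in \eqref{SerrinMulti}, and the $L^1$-refinement for the borderline case $E_l=0$.
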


%--------------------------------------------------------------------------------------------------------------------------
\subsection{Uniqueness}%-----------------------------------------------------------------------------------------
%--------------------------------------------------------------------------------------------------------------------------
\noindent We first give the proof in the case $m=2$ and then we proceed inductively. System \eqref{sysDir} reads as follows
\[ \begin{cases}
\begin{aligned}
(-\Delta)^{\alpha} u=\abs{v}^q \\
(-\Delta)^{\beta} v= \abs{u}^p
\end{aligned} &\text{ in } B_1,  \\
\frac{\partial^{r} u}{\partial \nu^{r}}=0, \, r=0, \dots, \alpha-1, & \text{ on }\partial B_1, \\
\frac{\partial^{r} v}{\partial \nu^{r}}=0, \, r=0, \dots, \beta-1, & \text{ on }\partial B_1.
\end{cases} \]

\noindent Assume without loss of generality that $\alpha \le \beta$.
We take two nontrivial solutions $(u, v)$ and $(w, z)$, and the parametrization
\[ \tilde w(r)=\lambda^s w(\lambda r), \, \tilde z(r)=\lambda^t z(\lambda r) \]
where $t=\frac{2\alpha p + 2\beta}{pq-1}$, $s=\frac{2\beta q + 2\alpha}{pq-1}$. Notice that $s, t$ are well defined if $pq \ne 1$.
Moreover we build the same table as in the previous sections with columns
\[ u-\tilde w, \, \Delta(u-\tilde w), \dots, \Delta^{\alpha-1} (u-\tilde w), \, v-\tilde z, \dots, \Delta^{\beta -1} (v-\tilde z) \]
if $\alpha$ is even, whereas
\[ u-\tilde w, \, \Delta(u-\tilde w), \dots, \Delta^{\alpha-1} (u-\tilde w), \, -v+\tilde z, \dots, \Delta^{\beta -1} (-v+\tilde z)\]
if $\alpha$ is odd.

\noindent Assume that (for even $\alpha$, and similarly for odd $\alpha$)
\begin{equation}\label{initialconf}
\begin{split}
(u- \tilde w)(0)=0, \, (-\Delta)^{k}(u- \tilde w)(0)<0, \, k=1, \dots, \alpha-1, \\
(-\Delta)^{k}(v- \tilde z)(0)<0, \, k=0, \dots, \beta-1
\end{split}
\end{equation}
is the initial configuration of the columns. We obtain a sequence $\{ R_j \}$ as in \autoref{sec:eqn} and assume that this is finite.

\noindent Let $n$ be the number of zeros of the first column. Then by \autoref{lem:zeros}, the $\alpha$-th column  has at least $n+1$ zeros, and as a consequence the next one must have $n$, $n+1$ or more zeros. Knowing that the $(\alpha+\beta)$-th column has $n$ or $n-1$ zeros, one has (again by \autoref{lem:zeros}, see \autoref{rmk:zeros}) that the $(\alpha +1)$-th column cannot have strictly more than $n$ zeros. Hence, it has $n$ zeros. However, $(v-\tilde z)(s)$ has opposite sign with respect to $(u- \tilde w)(s)$ in $(0, R_1)$, hence they have opposite sign in the last interval as well. Therefore, $(u-\tilde w)(\min \{1, 1/\lambda \})>0$ implies $\lambda >1$, whereas $(v-\tilde z)(\min \{1, 1/\lambda \})<0$ gives $\lambda <1$, a contradiction.

\noindent If $\alpha=1$, then as above we prove that the column $\tilde z- v$ cannot have strictly more than $n$ zeros. However, it must have at least $n$ zeros, as $u(0)=\tilde w(0)$, thus exactly $n$ zeros. Again, we have a contradiction.

\noindent Let us assume that for another initial configuration $\mathcal{A}$ we do not reach a contradiction as above. In $(0, R_1)$ the signs of the columns from the second to the last one are the same as in $\mathcal{A}$, and the first column must have the same sign as the second one, due to the maximum principle and the assumption $u(0)=\tilde w(0)$. Let us call $\mathcal{A}_1$ the configuration in  $(0, R_1)$, given $\mathcal{A}$ in $0$.
It turns out that one can reach the configuration $\mathcal{A}_1$ starting from \eqref{initialconf}.
Indeed, given \eqref{initialconf}, all the columns from the second to the second-to-last can be $=0$ in $R_1$. Then, it is sufficient to impose $=0$ in $R_1$ the columns which have different signs with respect to $\mathcal{A}_1$. If the first column has different sign, then it is enough to note that, once the second column has changed sign, the first column can be $=0$ and change sign as well. Analogously, one can change the sign of the last column once the first one has been $=0$. See \autoref{tableconf} for an example.

\begin{table}\centering\caption{Passing from \eqref{initialconf} to the configuration $u-\tilde w<0$, $\Delta(u-\tilde w)<0$, $(v- \tilde z)>0$, $\Delta (v- \tilde z)>0$, $\Delta^2(v-\tilde z)>0$. }
\label{tableconf}
\begin{adjustbox}{max width=\textwidth}
\begin{tabular}{r|cccccc}
&$(u-\tilde w)(s)$&$\Delta( u- \tilde w)(s)$&$ (v- \tilde z)(s)$&$ \Delta (v- \tilde z)(s)$&$\Delta^2(v-\tilde z)(s)$\\ \hline
$s=0$&=0&>0&<0&>0&<0\\ \hline
$s \in (0, R_1)$&>0&>0&<0&>0&<0\\ \hline
$s=R_1$&>0&=0&=0&>0&<0\\ \hline
$s \in (R_1, R_2)$&>0&<0&>0&>0&<0\\ \hline
$s=R_2$&=0&<0&>0&>0&<0\\ \hline
$s \in (R_2, R_3)$&<0&<0&>0&>0&<0\\ \hline
$s=R_3$&<0&<0&>0&>0&=0\\ \hline
$s \in (R_3, R_4)$&<0&<0&>0&>0&>0\\ \hline
$\vdots$ &$\vdots$ &$\vdots$ &$\vdots$ &$\vdots$ &$\vdots$ 
\end{tabular}
\end{adjustbox}
\end{table}

Therefore, if from any other initial configuration $\mathcal{A}$ we do not have a contradiction, then this would be possible given \eqref{initialconf} as well.

\noindent We have thus proved that the sequence $\{R_j\}$ has to be infinite. However, in this case we reach a contradiction as in the previous sections, as we apply Gronwall with
\[ U(r)=(u(r), -\Delta u(r), \dots, (-\Delta)^{\alpha-1} u(r), v(r), \dots, (-\Delta)^{\beta-1} v(r))\]
for $ 0 \le r \le 1$ and
\[  W(r)=(\tilde{w}(r), -\Delta \tilde w(r), \dots,  (-\Delta)^{\alpha-1} \tilde w(r), \tilde z(r), \dots (-\Delta)^{\beta-1} \tilde z(r)) \]
for $0 \le r \le 1/\lambda$ and
$$F(x_1, x_2, \dots, x_{\alpha}, y_1, \dots, y_{\beta})=(x_2, x_3, \dots, x_{\alpha-1}, y_1^q, y_2, \dots, y_{\beta -1}, x_1^p)\ .$$

\noindent This proves that in $0$ all the columns are zero. Therefore, again by Gronwall's Lemma, we have $u=\tilde w$ and $v=\tilde z$, which in turn gives $(u,v )=(w, z)$.

\noindent The proof in the case $m>2$ follows by induction, once we parametrize a second solution $(w_1, \dots, w_m)$ as follows
\[ \tilde w_i(r)=\lambda^{s_i} w(\lambda r), \, i=1, \dots, m, \]
where $\lambda$ is chosen such that $\tilde w_1(0)=u_1(0)$, whereas
\[ s_1=\frac{2\sum_{j=1}^{m} \alpha_j \prod_{k=1}^{j-1} p_k}{\prod_{k=1}^m p_k-1}   \]
and
\[ s_{i+1}=\frac{s_i+2\alpha_i}{p_i}, \, i=1, \dots, m-1.  \]
Assuming as induction hypothesis that the last column corresponding to the first $m$ equations can not have less zeros than the first one, and taking $m=1$ as the base case (see \autoref{lem:zeros}), then one proves that that property holds for $m+1$ as well, by the same arguments as above.
More precisely, the induction hypothesis implies that the last column corresponding to the first $m$ equations must have at least one zero more than the first one. By exploiting \autoref{rmk:zeros}, and knowing that the last column has at most $n$ zeros, one proves as above that $u_1-\tilde w_1$ and $u_{\alpha}-\tilde w_{\alpha}$ must have opposite signs at the last step, which gives the contradiction. As for the case $\{ R_j \}$ infinite, the contradiction follows by applying Gronwall's lemma.

\noindent The proof of Theorem \ref{teo:sysDir} is now complete.

\begin{remark}
Notice that the restriction $\alpha_j \le 4$ is necessary as we need to exploit \autoref{lem:zeros}. Actually, if we could extend \autoref{lem:zeros} to higher order operators, then it would be possible to extend \autoref{teo:sysDir} to more general operators as well.
\end{remark}

\subsection{Some natural boundary conditions: proof of Corollary \ref{teo:Var}}
\noindent Notice that \eqref{sys:Var} can be written as a system of $\sum \lceil \alpha_j/2 \rceil$ equations with Dirichlet boundary conditions. Let for instance $\alpha$ be even, and set $u_k=\Delta^{2k} u$. Then
\[ \begin{cases}
(-\Delta)^{\alpha} u=\abs{u}^p, \, & \text{ in } B_1\\
\Delta^{2k} u=0, \, 2k \le \alpha-1, & \text{ on }\partial B_1 \\
\frac{\partial}{\partial \nu} \Delta^{2k} u= 0, \,  2k+1 \le \alpha-1, & \text{ on }\partial B_1
\end{cases} \]
reads as
\[ \begin{cases}
\begin{aligned}
&\Delta^2 u_j=\abs{u_{j+1}},\, j =1, \dots, \alpha/2-1,  \\
&\Delta^2 u_{\alpha/2}=\abs{u_{1}}^{p},
\end{aligned} & \text{ in }B_1\\
u_j=\frac{\partial u_j}{\partial \nu}=0, \, j =1, \dots, \alpha/2 &\text{ on } \partial B_1,
\end{cases} \]
which is a particular case of \eqref{sysDir}.

\noindent Let us point out that the boundary conditions in \eqref{sys:Var} satisfy the complementing condition \cite{AgmonDouglisNirenberg59}, which here read as follows
\begin{definition}\label{def:ADN}
We say that the complementing condition holds for
\[ \begin{cases}
(-\Delta)^{\alpha} u=\abs{u}^p, \, & \text{ in }B_1\\
B_j(x, D) u=h_j, \, \text{ for } j=1, \dots, \alpha, & \text{ on }\partial B_1
\end{cases} \]
if, for any nontrivial tangent vector $\tau (x)$, the polynomials in $t$ $B_j'(x; \tau + t \nu)$ are linearly independent modulo the polynomial $(t-i \abs{\tau})^{\alpha}$, where $B_j'$ represents the highest order part of $B_j$.
\end{definition}
\noindent Consider the particular case $\alpha=4$ and let $\abs{\tau}=1$. Then $B_1'(x, \tau + t \nu)=1$, $B_2'(x, \tau + t \nu)=t$, $B_3'(x, \tau + t \nu)=t^4+1$ and $B_4'(x, \tau + t \nu)=t^5+t$. Dividing these polynomials by $(t- i)^4$, we get $1$, $t$, $4it^3+6t^2-4it$ and $-10t^3+20it^2+16t-4i$ as remainders, which are linearly independent.
The general case follows from the system \eqref{sysDir}. Indeed, one can extend \autoref{def:ADN} to the case of systems and prove that a system of $m$ equations with Dirichlet boundary conditions satisfy this extended condition, see \cite{AgmonDouglisNirenberg64}.
%--------------------------------------------------------------------------------------------------------------------------
\subsection{Navier's boundary conditions: proof of Corollary \ref{teo:Navier}}%----------------------------------------------------------------------------------------------------
%--------------------------------------------------------------------------------------------------------------------------

Recall that, given a nontrivial solution to \eqref{sys:Nav}, then it is positive, radially symmetric and strictly decreasing in the radial variable, see \cite[Theorem 7.3]{GazzolaGrunauSweers10}. This reduces the problem to system \eqref{sysDir} and thus  \autoref{teo:Navier} follows from \autoref{teo:sysDir}.
Let for instance $m=1$. Then
\[ \begin{cases}
(-\Delta)^{\alpha} u=\abs{u}^p, \, &  \text{ in } B_1\\
\Delta^k u=0, \, k \le \alpha-1, & \text{ on } \partial B_1
\end{cases} \]
becomes
\[ \begin{cases}
\begin{aligned}
&-\Delta u_j=\abs{u_{j+1}},\, j =1, \dots, \alpha-1,  \\
&-\Delta u_{\alpha}=\abs{u_{1}}^{p},
\end{aligned} & \text{ in } B_1 \\
u_j=0, \, j =1, \dots, \alpha &  \text{ on } \partial B_1,
\end{cases} \]
where $u_j=\Delta^j u$.

%%%%%%%%%%%%%%%%%%%%%%%%%%%%%%%%%%%%%%%%%%%%%%%%%%%%%%

\begin{bibdiv}
\begin{biblist}

\bib{AgmonDouglisNirenberg59}{article}{
	Author = {Agmon, S.},
	Author={Douglis, A.},
	Author={Nirenberg, L.},
%	Date-Added = {2018-12-30 08:53:21 +0000},
%	Date-Modified = {2018-12-30 08:54:26 +0000},
	Journal = {Comm.~Pure Appl.~Math.},
	Number = {4},
	Pages = {623--727},
	Title = {Estimates near the boundary for solutions of elliptic partial differential equations satisfying general boundary conditions. I. },
	Volume = {12},
	Year = {1959}}
	
\bib{AgmonDouglisNirenberg64}{article}{
	Author = {Agmon, S.},
	Author={Douglis, A.},
	Author={Nirenberg, L.},
	Journal = {Comm.~Pure Appl.~Math.},
	Number = {1},
	Pages = {35--92},
	Title = {Estimates near the boundary for solutions of elliptic partial differential equations satisfying general boundary conditions. II. },
	Volume = {17},
	Year = {1964}}

\bib{AziziehClement02}{article}{
	Author = {Azizieh, C.},
	Author={Cl{\'e}ment, P.},
	Journal = {J.~Differential Equations},
	Number = {1},
	Pages = {213--245},
	Title = {A priori estimates and continuation methods for positive solutions of {$p$-Laplace} equations},
	Volume = {179},
	Year = {2002}}

\bib{AziziehClementMitidieri02}{article}{
	Author = {Azizieh, C.},
	Author = {Cl{\'e}ment, P.},
	Author = {Mitidieri, E.},
	Journal = {J.~Differential Equations},
	Number = {2},
	Pages = {422-442},
	Title = {Existence and a priori estimates for positive solutions of $p$-Laplace systems},
	Volume = {184},
	Year = {2002}}
	
\bib{CASTA}{article}{
	Author = {Cassani, D.},
	Author ={Tarsia, A.},
	%Date-Added = {2017-11-08 09:42:26 +0000},
	%Date-Modified = {2017-11-08 09:43:29 +0000},
	Journal = {In Preparation},
	%Number = {6},
	%Pages = {1710--1714},
	Title = {Maximum principle for higher order operators in general domains},
	%Volume = {67},
	%Year = {}
	}

\bib{ClementFleckingerMitidierideThelin00}{article}{
	Author = {Cl{\'e}ment, P.},
	Author = {Fleckinger, J.},
	Author = {Mitidieri, E.},
	Author = {de Th{\'e}lin, F.},
	Journal = {J. Differential Equations},
	Number = {2},
	Pages = {455--477},
	Title = {Existence of positive solutions for a nonvariational quasilinear elliptic system},
	Volume = {166},
	Year = {2000}}

\bib{clement_felmer_mitidieri}{article}{
        Author = {Cl{\'e}ment, P.},
	Author = {Felmer, P.L.},
	Author = {Mitidieri, E.},
	Journal = {Scuola Norm. Sup. Pisa Cl. Sci.},
	Pages = {367--393},
	Title = {Homoclinic orbits for a class of infinite-dimensional Hamiltonian systems},
	Volume = {24},
	Year = {1997}}

\bib{CuiWangShi07}{article}{
	Author = {Cui, R.},
	Author ={Wang, Y.},
	Author ={Shi, J.},
%	Date-Added = {2017-11-08 09:42:26 +0000},
%	Date-Modified = {2017-11-08 09:43:29 +0000},
	Journal = {Nonlinear Anal.},
	Number = {6},
	Pages = {1710--1714},
	Title = {Uniqueness of the positive solution for a class of semilinear elliptic systems},
	Volume = {67},
	Year = {2007}}

\bib{Dalmasso95}{article}{
	Author = {Dalmasso, R.},
%	Date-Added = {2017-10-28 13:34:07 +0000},
%	Date-Modified = {2017-10-28 13:34:07 +0000},
	Journal = {Proc. Amer. Math. Soc.},
	Number = {4},
	Pages = {1177--1183},
	Title = {Uniqueness theorems for some fourth-order elliptic equations},
	Volume = {123},
	Year = {1995}}

\bib{Dalmasso99}{article}{
	Author = {Dalmasso, R.},
%	Date-Added = {2017-11-08 18:13:28 +0000},
%	Date-Modified = {2017-11-08 18:13:28 +0000},
	Journal = {Nonlinear Anal.},
	Number = {1},
	Pages = {131--137},
	Title = {Existence and uniqueness results for polyharmonic equations},
	Volume = {36},
	Year = {1999}}

\bib{Dalmasso04}{article}{
	Author = {Dalmasso, R.},
%	Date-Added = {2017-11-03 14:14:50 +0000},
%	Date-Modified = {2017-11-03 14:14:50 +0000},
	Journal = {Nonlinear Anal.},
	Number = {3},
	Pages = {341--348},
	Title = {Existence and uniqueness of positive radial solutions for the Lane-Emden system},
	Volume = {57},
	Year = {2004}}

\bib{FerreroGazzolaWeth07}{article}{
	Author = {Ferrero, A.},
	Author = {Gazzola, F.},
	Author = {Weth, T.},
%	Date-Added = {2017-11-08 18:13:28 +0000},
%	Date-Modified = {2017-11-08 18:13:28 +0000},
	Journal = {Ann. Mat. Pura Appl. (4)},
	Number = {4},
	Pages = {565--578},
	Title = {Positivity, symmetry and uniqueness for minimizers of second-order Sobolev inequalities},
	Volume = {186},
	Year = {2007}}

\bib{GazzolaGrunauSweers10}{book}{
	Author = {Gazzola, F.},
	Author ={Grunau, H.C.},
	Author ={Sweers, G.},
%	Date-Added = {2017-03-17 15:46:45 +0000},
%	Date-Modified = {2017-03-17 15:48:58 +0000},
	Publisher = {Springer-Verlag, Berlin},
	Title = {Polyharmonic boundary value problems. Positivity preserving and nonlinear higher order elliptic equations in bounded domains},
	Year = {2010}}

\bib{GidasNiNirenberg79}{article}{
	Author = {Gidas, B.},
	Author ={Ni, W.M.},
	Author ={Nirenberg, L.},
	%Date-Added = {2017-09-23 15:33:32 +0000},
	%Date-Modified = {2017-09-23 15:34:24 +0000},
	Journal = {Comm. Math. Phys.},
	Pages = {209--243},
	Title = {Symmetry and related properties via the maximum principle},
	Volume = {68},
	Year = {1979}}

\bib{liu_guo_zhang}{article}{
         Author = {Liu, J.},
         Author = {Guo, Y.},
         Author = {Zhang, Y.},
         Journal = {J. Partial Differential Equations},
         Title = {Existence of positive entire solutions for polyharmonic equations and systems},
         Pages = {256--270},
         Volume = {19},
         Year = {2006}
         }

\bib{MitidieriPohozaev01}{article}{
	Author = {Mitidieri, E.},
	Author= {Pohozaev, S.I.},
%	Date-Added = {2017-03-24 18:07:49 +0000},
%	Date-Modified = {2017-04-15 14:41:31 +0000},
	Journal = {Proc.~Steklov Inst.~Math.},
	Number = {3},
	Pages = {1--362},
	Title = {A priori estimates and the absence of solutions of nonlinear partial differential equations and inequalities},
	Volume = {234},
	Year = {2001}}
	
\bib{MitidieriPohozaev03}{article}{
	Author = {Mitidieri, E.},
	Author= {Pohozaev, S.I.},
	Journal = {Doklady Mathematics},
	Number = {2},
	Pages = {159--164},
	Title = {The positivity property of solutions of some nonlinear elliptic inequalities in $\mathbb{R}^N$},
	Volume = {393},
	Year = {2003}}

\bib{pucci_serrin}{article}{
        Author= {Pucci, P.},
        Author={Serrin, J.},
        Journal={Indiana Univ. Math. J.},
        Number={3},
        Pages={681--703},
        Title={A general variational identity},
        Volume={35},
        Year={1986}}

\bib{Schiera18}{article}{
	Author = {Schiera, D.},
%	Date-Added = {2017-09-28 15:54:14 +0000},
%	Date-Modified = {2017-12-18 15:30:32 +0000},
	Journal = {Nonlinear Anal.},
	Pages = {130--153},
	Title = {Existence of solutions to higher order {Lane-Emden} type systems},
	Volume = {168},
	Year = {2018}}

\bib{Schiera18_2}{article}{
	Author = {Schiera, D.},
	Journal = {Discrete Contin. Dyn. Syst. A},
	Number={10},
	Pages = {5145--5161},
	Title = {Existence and non-existence results for variational higher order elliptic systems},
	Volume = {38},
	Year = {2018}}

\bib{Schiera19}{article}{
	Author = {Schiera, D.},
	Journal = {Ph.D. Thesis - In Preparation},
	Title = {Existence, non-existence and uniqueness results for higher order elliptic systems},
	Year = {2019}}

\end{biblist}
\end{bibdiv}

\end{document}